\documentclass[a4paper]{amsart}
\pdfoutput=1
\usepackage[latin1]{inputenc}
\usepackage{ae,aecompl,amsbsy,amssymb,amsmath,amsthm,
eurosym,amsfonts,epsfig,graphicx,graphics,verbatim,enumerate,color}
\usepackage{hyperref}
\hypersetup{colorlinks=true,linkcolor=blue}
\usepackage{tikz}
\usetikzlibrary{arrows, calc, positioning, decorations.markings}
\tikzset{
    between/.style args={#1 and #2}{
         at = ($(#1)!0.5!(#2)$)
    }
}



\let\oldaddcontentsline\addcontentsline
\newcommand{\stoptocentries}{\renewcommand{\addcontentsline}[3]{}}
\newcommand{\starttocentries}{\let\addcontentsline\oldaddcontentsline}

\newcommand{\norm}[1]{\left\| #1 \right\|}

\DeclareMathOperator*{\essinf}{ess\,inf}

\newcommand{\br}{\mathbb{R}}

\newcommand{\cm}{\mathcal{M}}
\newcommand{\cl}{\mathcal{L}}

\newcommand{\cs}{\mathcal{S}}

\DeclareMathOperator{\supp}{supp}

\newcommand{\abs}[1]{\left\lvert#1\right\rvert}


\newcommand{\dmu}{\mathrm{d}\mu}

\newcommand{\dnu}{\mathrm{d}\nu}



\usepackage{enumitem}





\theoremstyle{plain}
\newtheorem{theorem}{Theorem}[section]

\newtheorem{lemma}[theorem]{Lemma}
\newtheorem{proposition}[theorem]{Proposition}
\newtheorem{corollary}[theorem]{Corollary}

\theoremstyle{definition}
\newtheorem{definition}[theorem]{Definition}

\theoremstyle{remark}
\newtheorem*{remark}{Remark}
\newtheorem*{example}{Example}
\newtheorem*{example_continued}{Example (continued)}

\newtheorem{claim}{Claim}

\numberwithin{equation}{section}

\begin{document}
\date{\today}
\title[A variational proof of a disentanglement theorem]{A variational proof of a disentanglement theorem for multilinear norm inequalities}
\author{Anthony Carbery}
\author{Timo S. H\"anninen}
\author{Stef\'an Ingi Valdimarsson}
\begin{abstract}
   The {\em basic  disentanglement theorem} established by the present authors states that estimates on a weighted geometric mean over (convex) families of functions can be disentangled into quantitatively linked estimates on each family separately. On the one hand, the theorem gives a uniform approach to classical results including Maurey's factorisation theorem and Lozanovski\u{\i}'s factorisation theorem, and, on the other hand, it underpins the duality theory for multilinear norm inequalities developed in our previous two papers.

   In this paper we give a simple proof of this basic disentanglement theorem.  Whereas the approach of our previous paper was rather involved -- it relied on the use of minimax theory together with weak*-compactness arguments in the space of finitely additive measures, and an application of the Yosida--Hewitt theory of such measures -- the alternate approach of this paper is rather straightforward: it instead depends upon elementary perturbation and compactness arguments.
\end{abstract}

\address{Anthony Carbery, School of Mathematics and Maxwell Institute for Mathematical Sciences, University of Edinburgh, James Clerk Maxwell Building, Peter Guthrie Tait Rd, Kings Buildings, Edinburgh EH9 3FD, Scotland}
\email{A.Carbery@ed.ac.uk}

\address{Timo S. H\"anninen, Department of Mathematics and Statistics, University of Helsinki, P.O. Box 68, FI-00014 Helsinki, Finland}
\email{timo.s.hanninen@helsinki.fi}

\address{Stef\'an Ingi Valdimarsson,
Arion banki, Borgart\'un 19, 105 Reykjav\'ik,
Iceland; and Science
Institute, University of Iceland, Dunhagi 5, 107
Reykjav\'ik, Iceland.}
\email{sivaldimarsson@gmail.com}

\maketitle
\setcounter{tocdepth}{2}
\tableofcontents
%
%
\section{Introduction}

The purpose of this paper is to give a simple proof of the following {\it basic disentanglement theorem}:

\begin{theorem}[Basic disentanglement theorem, {\cite[Theorem~2.2]{chv2}}]\label{theorem:basic}
Let $(X, {\rm d}\mu)$ be a $\sigma$-finite measure space. Suppose that $\alpha_j > 0$ are such that $\sum_{j=1}^d \alpha_j = 1$. For each $j \in \{1, \dots , d\}$ let $\mathcal{G}_j$ be a saturating\footnote{A set $\mathcal{G}$ of measurable functions is called {\it saturating} if the functions vanish identically only on sets of measure zero: if $f=0$ almost everywhere on $E$ for all $f\in \mathcal{G}$, then $E$ has measure zero.} convex set of nonnegative measurable functions. Assume that 
     \begin{equation*}
    \int_X \prod_{j=1}^d g_{j}(x)^{\alpha_j} {\rm d} \mu(x)
    \leq A \quad\text{for all $g_j\in  \mathcal{G}_j$.}
\end{equation*}
 Then there exist nonnegative measurable functions $\phi_j$ such that
  \begin{equation*}
\prod_{j=1}^d \phi_j(x)^{\alpha_j} \geq 1
  \end{equation*}
almost everywhere on $X$, and such that for all $j$,
   \begin{equation*}
    \int_X g_{j}(x) \phi_j(x) {\rm d} \mu(x) \leq A\quad\text{for all $g_j\in \mathcal{G}_j$.} 
    \end{equation*}
\end{theorem}

This basic disentanglement theorem underpins the duality theory for multilinear norm inequalities developed in our previous papers \cite{chv1,chv2}. The scope of the duality theory includes Brascamp--Lieb type inequalities and multilinear Kakeya and restriction type inequalities. Each such norm inequality is equivalent to the existence of a factorisation naturally associated with it. The precise statement for positive multilinear inequalities is as follows (whose case $p_j=q=r_j=1$ is in fact equivalent to the basic disentanglement theorem):
\begin{theorem}[Duality theorem for positive multilinear inequalities, cf. Theorem~5.1 of \cite{chv2}]\label{theorem:duality}Let $\cm$ denote the space of measurable functions. For $j=1,\ldots,d$, let $1 \leq q, r_j < \infty$, 
$T_j:L^{r_j}\to \cm$ be saturating\footnote{An operator $T:L^r\to \cm$ is called {\it saturating} if it can vanish identically only on sets of measure zero: if $Tf=0$ almost everywhere on $E$ for all $f\in L^r$, then $E$ has measure zero. } positive linear operators, and $\theta_j\in(0,1)$ be weights such that $\sum_{j=1}^d\theta_j=1$.  Assume that  $(p_j)\in\prod_{j=1}^d(0,r_j]$. Then the following assertions are equivalent:
\begin{enumerate}
    \item (Norm inequality) We have
    $$
    \norm{\prod_{j=1}^d \abs{T_jf_j}^{p_j\theta_j} }_{L^q}\leq \prod_{j=1}^d \norm{f_j}_{L^{r_j}}^{p_j\theta_j} \quad\text{for all $f_j\in L^{r_j}$.}
    $$
    \item (Existence of factorisation) For every $g\in L^{q'}$ there exist measurable functions $g_j$ such that $\abs{g}\leq \prod_{j=1}^d \abs{g_j}^{\theta_j}$ almost everywhere and such that for each $j=1,\ldots,d$ we have
  $$\left( \int \abs{T_jf_j}^{p_j} \abs{g_j}  \right)^{1/p_j}\lesssim_{p_j,r_j} \|g\|_{q'}\norm{f_j}_{L^{r_j}}\quad\text{for all $f_j\in L^{r_j}$.} 
$$
\end{enumerate}
Furthermore, the set $\prod_{j=1}^d(0,r_j]$ of admissible exponents is sharp in that outside it the equivalence may fail. 
\end{theorem}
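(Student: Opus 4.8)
The plan is to obtain Theorem~\ref{theorem:duality} from the disentanglement theorem, the central result of this paper: conditions (1) and (2) are, respectively, the \emph{entangled} and the \emph{disentangled} forms of one and the same estimate, and passing from the first to the second is exactly what the disentanglement theorem does once the norm inequality has been transcribed into the language of saturated convex sets. Of the two implications, $(2)\Rightarrow(1)$ is soft while $(1)\Rightarrow(2)$ carries the substance.

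For $(2)\Rightarrow(1)$, fix $f_j\in L^{r_j}$ and use $L^q$--$L^{q'}$ duality to pick $g\ge 0$ with $\norm{g}_{q'}\le 1$ nearly attaining $\norm{\prod_{j}\abs{T_jf_j}^{p_j\theta_j}}_{L^q}$; let $g_1,\dots,g_d$ be the functions supplied by (2) for this $g$. Since $g\le\prod_j\abs{g_j}^{\theta_j}$ pointwise and $\sum_j\theta_j=1$, Hölder's inequality with the conjugate exponents $1/\theta_1,\dots,1/\theta_d$ gives
\[
\int g\prod_{j=1}^{d}\abs{T_jf_j}^{p_j\theta_j}\ \le\ \int\prod_{j=1}^{d}\bigl(\abs{g_j}\,\abs{T_jf_j}^{p_j}\bigr)^{\theta_j}\ \le\ \prod_{j=1}^{d}\Bigl(\int\abs{g_j}\,\abs{T_jf_j}^{p_j}\Bigr)^{\theta_j}\ \lesssim\ \prod_{j=1}^{d}\norm{f_j}_{L^{r_j}}^{p_j\theta_j},
\]
the last step by (2) and $\norm{g}_{q'}\le 1$; this is (1) up to a constant depending only on the $p_j$ and $r_j$, and I would treat the reduction to the clean constant $1$ as a routine normalisation point.

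For $(1)\Rightarrow(2)$, dualising the $L^q$ norm turns (1) into the entangled estimate: for every $g\in L^{q'}$ and all $f_j\in L^{r_j}$,
\[
\int\abs{g}\prod_{j=1}^{d}\abs{T_jf_j}^{p_j\theta_j}\ \le\ \norm{g}_{q'}\prod_{j=1}^{d}\norm{f_j}_{L^{r_j}}^{p_j\theta_j}.
\]
Fixing $g$, I would attach to each operator $T_j$ a saturated convex set $\cs_j$ of nonnegative measurable functions, obtained from the family $\{\abs{T_jf_j}^{p_j}:f_j\in L^{r_j}\}$ by forming convex combinations and dominated functions --- the hypothesis that $T_j$ is saturating being exactly what makes $\cs_j$ saturated --- together with a size functional $\rho_j$ on $\cs_j$ calibrated so that $\rho_j(\abs{T_jf_j}^{p_j})\le\norm{f_j}_{L^{r_j}}^{p_j}$. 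The point of the restriction $p_j\le r_j$ is that it is precisely what allows this calibration to be carried through consistently after passing to the convex hull: morally, the relevant $L^{r_j}$-gauge is preserved under Jensen's inequality for the power $t\mapsto t^{p_j/r_j}$, which is concave exactly when $p_j\le r_j$. With $(\cs_j,\rho_j)$ in place the entangled estimate becomes the hypothesis of the disentanglement theorem, whose conclusion produces nonnegative $g_j$ with $g\le\prod_j g_j^{\theta_j}$ almost everywhere and $\int s\,g_j\lesssim_{p_j,r_j}\rho_j(s)$ for every $s\in\cs_j$; specialising to $s=\abs{T_jf_j}^{p_j}$ and tracking the homogeneity of the entangled estimate in $g$ yields (2).

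For the sharpness assertion I would, whenever some $p_j>r_j$, construct a saturating linear operator $T_j$ and a function $g$ for which (1) holds yet no factorisation as in (2) exists: for $p>r$ the power $t\mapsto t^{p/r}$ is convex, the convex hull of $\{\abs{f}^{p}:\norm{f}_{L^r}\le 1\}$ is genuinely larger than the downward closure of that family, the calibration above collapses, and the counterexample is built around exactly this gap. I expect the main obstacle to be this calibration --- the construction of $(\cs_j,\rho_j)$ and the verification that the entangled estimate really does furnish the hypothesis of the disentanglement theorem, which is the one place where $p_j\le r_j$ is indispensable; everything else is duality, Hölder's inequality, and a direct appeal to the disentanglement theorem.
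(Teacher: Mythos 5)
A point of orientation first: this paper does not actually prove Theorem~\ref{theorem:duality} --- it is quoted from \cite{chv2}, and the only indication of its derivation given here is the footnote saying that it follows from the Disentanglement Theorem~\ref{theorem:disentanglement} applied to the families $u_{j,k}:=\abs{T_jf_j}^{p_j}/\norm{f_j}_{L^{r_j}}^{p_j}$ ``via vector-valued techniques''. Your route is exactly that intended one: $(2)\Rightarrow(1)$ by $L^q$--$L^{q'}$ duality and multi-factor H\"older with exponents $1/\theta_j$ is fine, and for $(1)\Rightarrow(2)$ you dualise and feed normalised families into the disentanglement theorem (applied on the measure $\abs{g}\dmu$, with $g_j:=\phi_j\abs{g}$ recovering the factorisation). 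So the strategy is correct and is the one the paper points to.

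The genuine gap is the step you call ``calibration'', which is precisely the ``vector-valued techniques'' of the footnote and is left as a vague gesture (``a size functional $\rho_j$\dots Jensen for $t\mapsto t^{p_j/r_j}$''). To invoke Theorem~\ref{theorem:disentanglement} you must verify hypothesis \eqref{eq:dt_assumption}, i.e.\ the entangled estimate for arbitrary finite nonnegative combinations $\sum_{k}\alpha_{j,k}\abs{T_jf_{j,k}}^{p_j}/\norm{f_{j,k}}_{L^{r_j}}^{p_j}$, whereas (1) only controls a single $f_j$ per slot. Closing this requires the positivity of the $T_j$ (which your write-up never invokes, although the theorem is specifically about positive multilinear inequalities and fails as stated for general linear $T_j$) and splits into two regimes: for $p_j\le 1$, H\"older in $k$ gives $\sum_k\alpha_k b_k^{p_j}\le(\sum_k\alpha_k)^{1-p_j}(\sum_k\alpha_k b_k)^{p_j}$ and positivity collapses $\sum_k\alpha_k\abs{T_jf_{j,k}}$ into a single $T_jF_j$; for $1\le p_j\le r_j$, one uses the $\ell^{p_j}$-valued boundedness of positive operators together with the triangle inequality in $L^{r_j/p_j}$ --- this, rather than concavity of $t\mapsto t^{p_j/r_j}$, is where $p_j\le r_j$ enters. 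Two further loose ends you should not wave at: conclusion (2) is not homogeneous of degree one in $g$ (the left-hand side scales like $\lambda^{1/p_j}$ under $g_j\mapsto\lambda g_j$), so the normalisation $\norm{g}_{q'}=1$ and the rescaling of the $g_j$ must be done explicitly; and the sharpness assertion needs an actual counterexample for some $p_j>r_j$, which your proposal only promises.
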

\noindent The corresponding multilinear duality theorem for general linear operators $T_j$ is similar, but the conditions on the exponents $(p_j)$ are more complicated; see Theorem 5.2 of \cite{chv2} for the precise statement.

The bilinear case of the basic disentanglement theorem gives a uniform approach to several classical results, such  as the Maurey factorisation theorem \cite{maurey} and the Lozanovski\u{\i} factorisation theorem \cite{lozanovskii}, which can be recovered as particular cases from it. See \cite[Theorem~1.6]{chv1} for the argument for the Maurey theorem; we present here the argument for the Lozanovski\u{\i} theorem:
\begin{corollary}[Lozanovski\u{\i} factorisation theorem \cite{lozanovskii}]Let $X$ be a K\"othe function space.\footnote{A Banach space $X$ of measurable functions is called a K\"othe space
if for all $f \in \mathcal{M}$ and $g \in X$  with $|f| \leq |g|$ a.e. we have $f\in X$ and $\|f\|_X \leq \|g\|_X$, and if there exists $f \in X$ with $f > 0$ a.e. (strong saturation). The K\"othe dual $X'$ to a K\"othe space $X$ is defined as $\{g \in \mathcal{M} \, : \, \int |fg| < \infty \mbox{ for all }f \in X$\} with its natural norm. We say that $X'$ is norming if for all $f \in X$ we have $\|f\|_X = \sup_{\|g\|_{X'} \leq 1} |\int fg|$.}
Assume that its K\"othe dual $X'$ is norming. Then for each $f\in L^1$ there exist $g\in X$ and $h\in X'$ such that $f=g h$ and such that $\norm{g}_X\norm{h}_{X'}\leq \norm{f}_{L^1}$.
\end{corollary}

\begin{proof}
We establish the following equivalent statement: for each $f\in L^2$ there exist $g\in X$ and $h\in X'$ such that $f=g^{1/2} h^{1/2}$ and such that $\norm{g}_X\leq \norm{f}_{L^2}$ and $\norm{h}_{X'}\leq \norm{f}_{L^2}$. Let $f\in L^2$. Now, by H\"older's inequality,
$$
\int \tilde{g}^{1/2} \tilde{h}^{1/2} f\,\dmu\leq \int \tilde{g} \tilde{h} \, \dmu \norm{f}_{L^2}\leq  \norm{f}_{L^2} \norm{\tilde{g}}_X \|{\tilde{h}}\|_{X'}\leq \norm{f}_{L^2}
$$
for all $ \tilde{g} \in B_X$ and $ \tilde{h} \in B_{X'}$ where $B_X$ and $B_{X'}$ are the unit balls in $X$ and $X'$ respectively.

The unit balls $B_X$ and $B_{X'}$ are convex and $B_X$ is strongly saturating by the definition of a K\"othe function space. Moreover, the unit ball $B_{X'}$ is also saturating because, by assumption, $X'$ is norming.\footnote{This may be seen by contraposition. Indeed, assume that $B_{X'}$ is not saturating. Then, by definition of saturation, there exists a measurable set $E$ of positive measure such that $1_Eg=0$ almost everywhere for every $g\in B_{X'}$. Moreover, by the definition of a K\"othe function space, there exists $f\in X$ such that $f>0$ almost everywhere. Write $f':=1_Ef$. Now, $0\neq f' \in X$ and hence $\norm{f'}_X\neq 0$, but $\sup_{g\in B_{X'}} |\int  f' g  |=0$.  Therefore, $X'$ is not norming.} 

Therefore, by the basic disentanglement theorem  (with respect to the measure $f\dmu$, which is $\sigma$-finite), there exist measurable functions $\phi$ and $\psi$ such that $\phi^{1/2} \psi^{1/2}=1$ a.e. on $\{f>0\}$ and such that
$
\int \tilde{g} \phi f\dmu\leq \norm{f}_{L^2}$ for all $\tilde{g}\in B_X$ and $\int \tilde{h} \psi f\dmu\leq \norm{f}_{L^2}$ for all $\tilde{h}\in B_{X'}$. The first conclusion implies that $f=(\phi f)^{1/2} (\psi f)^{1/2}$, and the second conclusion is equivalent, by duality together with the assumption that $X'$ is norming, to the estimates $\norm{\phi f}_{X'}\leq \norm{f}_{L^2}$ and $\norm{\psi f}_{X}\leq \norm{f}_{L^2}$.

\end{proof}
The basic disentanglement theorem, Theorem \ref{theorem:basic}, has several equivalent formulations, and the equivalences are straightforward \cite[Section~2]{chv2}.
The theorem, in one of its equivalent formulations, was proved as Theorem~1.3 in \cite{chv1} using minimax theory and rather difficult compactness arguments involving finitely additive measures together with the Yosida--Hewitt theory of such measures. 

The main purpose of this paper is to provide an alternate approach which uses perturbation and different, rather easier, compactness arguments. The proof in this paper is quite short and also elementary in that it uses only the basics of measure theory and functional analysis.

For the proof in this paper, the most relevant of the equivalent formulations of Theorem~\ref{theorem:basic} are the case of  exponents $r_j =p_j = q = 1$ in Theorem \ref{theorem:duality}, and the following result which we now state:

\medskip

\begin{theorem}[Equivalent formulation of the basic disentanglement theorem, cf. Theorem 2.1 of \cite{chv2}]\label{theorem:disentanglement}Let $(\Omega,\mu)$ be a $\sigma$-finite measure space. Let $\theta_j\in(0,1)$ be such that $\sum_{j=1}^d \theta_j=1$. Let $K_j$ be indexing sets and $\{u_{j,k_j}\}_{k_j\in K_j}$ be saturating families of non-negative measurable functions indexed by them. Assume that 
\begin{equation}
\label{eq:dt_assumption}
\int_\Omega \prod_{j=1}^d \abs{\sum_{k_j\in K_j} \alpha_{j,k_j} u_{j,k_j}}^{\theta_j} \dmu \leq A \prod_{j=1}^d \left( \sum_{k_j\in K_j} \abs{\alpha_{j,k_j}}\right)^{\theta_j}
\end{equation}
for all finitely supported families $\{\alpha_{j,k_j}\}_{k_j\in K_J}$ of reals. Then there exist non-negative measurable functions $\phi_j$ such that
\begin{equation}
\label{eq:dt_decomposition}
    1\leq \prod_{j=1}^d \phi_j^{\theta_j} \quad\text{$\mu$-a.e.}
\end{equation}
and such that for each $j=1,\ldots,d$ we have
\begin{equation}\label{eq:dt_componentwise_bound}
    \int u_{j,k_j} \phi_j \dmu \leq A \quad\text{for all $k_j\in K_j$}.
\end{equation}
\end{theorem}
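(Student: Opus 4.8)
\emph{Strategy.} Since the $u_{j,k_j}$ are non-negative, homogeneity and the triangle inequality show that \eqref{eq:dt_assumption} is equivalent to the statement that $\int_\Omega\prod_{j}v_j^{\theta_j}\dmu\le A$ for every choice of $v_j\in\co\{u_{j,k_j}:k_j\in K_j\}$, and likewise the desired conclusion \eqref{eq:dt_componentwise_bound} is equivalent to $\int v_j\phi_j\dmu\le A$ for all such $v_j$. The configuration that makes the elementary estimate $\int\prod_jv_j^{\theta_j}\dmu\le\int\sum_j\theta_j v_j\phi_j\dmu$ (weighted AM--GM, valid whenever $\prod_j\phi_j^{\theta_j}\ge1$) sharp is $\phi_j=\bigl(\prod_\ell v_\ell^{\theta_\ell}\bigr)/v_j$, which simultaneously forces $\prod_j\phi_j^{\theta_j}\equiv1$ and turns the estimate into an equality; this choice will drive the construction. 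Note also that $(\phi_j)\mapsto\prod_j\phi_j^{\theta_j}$ is concave, so the set of admissible $(\phi_j)$ in \eqref{eq:dt_decomposition} is convex. The proof has two stages: a finite-dimensional case, and a compactness argument reducing the general case to it.

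\emph{The finite-dimensional case.} Assume first that $\Omega$ has finitely many atoms, so each $\phi_j$ is a vector in $\br^N$, and that each $K_j$ is finite. To avoid dividing by zero, replace $u_{j,k_j}$ by $u_{j,k_j}+\varepsilon$; since $\mu(\Omega)<\infty$ and each $\int u_{j,k_j}<\infty$, the subadditivity $(a+b)^\theta\le a^\theta+b^\theta$ together with Hölder's inequality shows that \eqref{eq:dt_assumption} persists for the perturbed family with a constant $A_\varepsilon\to A$. Consider in $\prod_j\br^N$ the convex feasible region $R=\{(\phi_j):\phi_j\ge0,\ \prod_j\phi_j^{\theta_j}\ge1\}$ and the polytope $S=\{(\phi_j):\int(u_{j,k_j}+\varepsilon)\phi_j\dmu\le A_\varepsilon\ \forall\,j,k_j\}$; crucially $S$ is bounded, hence compact, because $u_{j,k_j}+\varepsilon\ge\varepsilon>0$. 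If $R\cap S=\varnothing$ then, $S$ being compact, we may strictly separate them; since $R$ is upward closed the separating functional $\ell$ is a non-negative vector, and dualising the linear program $\max_S\langle\ell,\cdot\rangle$ componentwise produces, on each component $j$, a probability vector $(\tilde\nu_{j,k_j})_{k_j}$. Setting $w_j=\sum_{k_j}\tilde\nu_{j,k_j}(u_{j,k_j}+\varepsilon)\ge\varepsilon$ and testing against $\phi_j:=\bigl(\prod_\ell w_\ell^{\theta_\ell}\bigr)/w_j\in R$, the perturbed hypothesis applied to $w_1,\dots,w_d$ bounds $\langle\ell,\phi\rangle$ by $\max_S\langle\ell,\cdot\rangle$, contradicting the strict separation. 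Hence the perturbed problem is solvable with constant $A_\varepsilon$; letting $\varepsilon\to0$ and invoking strong compactness in $\br^N$ (the feasible $\phi_j$ are bounded at every atom where the family is non-zero, and irrelevant elsewhere) yields a solution with the sharp constant $A$.

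\emph{The general case.} Exhaust $\Omega$ by an increasing sequence $\Omega_m$ of sets of finite measure, chosen so that on each $\Omega_m$ every family $\{u_{j,k_j}\}_{k_j}$ has a member bounded below by some $c_m>0$; let the $F_j$ range over finite subsets of $K_j$; and approximate each $u_{j,k_j}$ from below by simple functions $u^{(n)}_{j,k_j}\uparrow u_{j,k_j}$, all subordinate to a single finite partition of $\Omega_m$. The key point is that, by monotonicity, the approximated data $(\Omega_m,\{u^{(n)}_{j,k_j}\}_{k_j\in F_j})$ still satisfies the convex-hull form of \eqref{eq:dt_assumption} with the \emph{same} constant $A$; and since $\phi_j$ may be replaced by its conditional expectation on that partition without affecting either constraint (Jensen, using concavity of $\prod_j\phi_j^{\theta_j}$), this finitary problem is an instance of the finite-dimensional case, hence solvable by a solution with $\int_{\Omega_m}\phi_j\dmu\le A/c_m$. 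One then works inside a single weak-$*$ compact space $X$ (a product of balls of measures on $\Omega$ of mass $\le A/c_m$ on each $\Omega_m$), in which the sets $C_{F,m,n}$ of feasible points are weak-$*$ closed: the affine constraints $\int u^{(n)}_{j,k_j}\phi_j\dmu\le A$ because that functional is weak-$*$ lower semicontinuous, and the constraint $\prod_j\phi_j^{\theta_j}\ge1$ $\mu$-a.e.\ because $(\phi_j)\mapsto\int_E\prod_j\phi_j^{\theta_j}\dmu$ is concave and weak-$*$ upper semicontinuous for every $E$. These closed sets are directed, and every finite sub-collection contains a further $C_{F',m',n'}$, which is non-empty by the finite-dimensional case; the finite intersection property then yields a point in the total intersection, whose absolutely continuous densities $\phi_j$ — after letting $\Omega_m\uparrow\Omega$ and $u^{(n)}\uparrow u$ (monotone convergence) — are the functions required by \eqref{eq:dt_decomposition}--\eqref{eq:dt_componentwise_bound}.

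\emph{The main difficulty.} The technical heart is the compactness set-up of the last stage: one must choose the ambient space so that \emph{simultaneously} every constraint set is closed in it and every finitary sub-problem has a solution lying in it. Pointwise-capping the densities can destroy solvability of a sub-problem, whereas passing to genuine measures risks the geometric-mean constraint being lost when weak-$*$ limits shed mass into a singular part; reconciling this — via the a priori mass bounds supplied by the constants $c_m$, the simple-function approximations from below, and the concavity of the geometric mean (which is exactly what makes $\{\prod_j\phi_j^{\theta_j}\ge1\}$ survive weak-$*$ limits) — is where the real work lies. By contrast, the finite-dimensional case, once the perturbation has been inserted, reduces to a routine finite-dimensional separation.
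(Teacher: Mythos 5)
Your finite-dimensional step is correct but proceeds by a genuinely different route from the paper's. Where the paper finds an extremiser $(g_j)$ of $\int\prod_j\abs{T_jg_j}^{\theta_jq}\dmu$ (with an auxiliary exponent $q<1$ to force $T_jg_j>0$ a.e.), defines $\phi_i=(T_ig_i)^{-1}\prod_j(T_jg_j)^{\theta_jq}$ explicitly, and verifies \eqref{eq:dt_componentwise_bound} by a first-variation computation, you instead perturb the weights $u_{j,k_j}\mapsto u_{j,k_j}+\varepsilon$ to make the polytope $S$ compact and run a separation/LP-duality argument: if $R\cap S=\varnothing$, the dual multipliers produce convex combinations $w_j\in\co\{u_{j,k_j}+\varepsilon\}$ whose associated $\phi_j=\bigl(\prod_\ell w_\ell^{\theta_\ell}\bigr)/w_j$ violate the separation. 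This is a minimax-flavoured argument closer in spirit to \cite{chv1} than to Section \ref{sec:finite} here, but in finite dimensions it is clean and complete (modulo routine points: $S$ must include $\phi_j\ge0$ to be bounded, the degenerate case $\ell_j=0$ must be set aside, and saturation is what makes every atom ``relevant'' so that the $\varepsilon\to0$ limit stays in a compact set). Note that your test functions $\phi_j=\bigl(\prod_\ell w_\ell^{\theta_\ell}\bigr)/w_j$ are exactly the paper's formula \eqref{eq:formula_phi}, reached from the dual side rather than the primal side.

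The genuine gap is in your general case, and it sits precisely where you say ``the real work lies.'' You propose to take weak-$*$ limits in ``a product of balls of measures on $\Omega$'' and then pass to ``absolutely continuous densities.'' On an abstract $\sigma$-finite measure space there is no weak-$*$ compact ball of countably additive measures: the only compact object available is the ball of $(L^\infty)^*$, i.e.\ finitely additive measures, and extracting a countably additive, $\mu$-absolutely continuous part that still satisfies \eqref{eq:dt_decomposition} and \eqref{eq:dt_componentwise_bound} is exactly the Yosida--Hewitt argument of \cite{chv1} that this paper is written to avoid. Your proposal asserts, but does not prove, the three claims on which everything hinges: that the ambient space is compact, that the constraint $\{\prod_j\phi_j^{\theta_j}\ge1\}$ defines a closed set there (for measures this must be phrased via the geometric mean of measures and its upper semicontinuity under weak-$*$ convergence of nets), and that the constraints survive the passage to the absolutely continuous part. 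These can all be made to work -- that is the content of \cite{chv1} -- but they are the hard part, not bookkeeping. The paper's alternative is the observation of Section \ref{sec:powertrick}: replace $\phi_j$ by $\psi_j=\phi_j^{1/p}$ with $1<p<\infty$, so that the a priori bound $\int u_{j,k_{j,0}}\phi_j\dmu\le A$ places $(\psi_j)$ in the unit ball of the \emph{reflexive} space $L^p(w_1)\times\cdots\times L^p(w_d)$; that ball is weakly compact, the constraint sets are convex and norm-closed (hence weakly closed by Mazur), and no singular parts ever appear. If you want a self-contained elementary proof rather than a re-derivation of the \cite{chv1} route, this substitution is the missing idea.
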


\subsection{The approach in this paper}
The proof in this paper proceeds in two steps. First, in Section \ref{sec:finite}, we prove a finite-dimensional version of the theorem via perturbation and  strong  compactness (which from the viewpoint of Theorem~\ref{theorem:duality} takes place on the domain side). For technical reasons we introduce an auxiliary parameter $q < 1$ for this result.
 Our use of perturbation is similar to its deployment in Pisier's proof \cite{pisier} of the Maurey factorisation theorem \cite{maurey} and in Gillespie's proof \cite{gillespie} of the Lozanovski\u{\i} factorisation theorem \cite{lozanovskii}.
Second, in Section \ref{sec:infinite}, we prove the full infinite-dimensional theorem building on its finite-dimensional version, via  the finite intersection property and weak compactness (which from the viewpoint of  Theorem~\ref{theorem:duality} takes place on the target side)\footnote{For the reader's convenience, we note that we refer to strong (i.e. ~norm-)compactness in the finite-dimensional setting,
and to weak compactness in the setting of infinite-dimensional $L^p$ spaces, in which case will always be working in the reflexive range $1 < p < \infty$.}.
Section \ref{sec:preliminaries} consists of preliminaries. In Section \ref{sec:saturation}, the definition and basic properties of {\it saturation} and {\it strong saturation} are given. In Section \ref{sec:upgrading}, we notice that in the Disentanglement Theorem \ref{theorem:disentanglement} we may make slightly stronger assumptions without loss of generality: we may assume that the measure is a probability (in place of merely $\sigma$-finite) measure and that each family  of functions is strongly saturating (in place of merely saturating).

\subsection{Compactness arguments -- a new viewpoint}\label{sec:powertrick}The use of compactness in our current proof differs from its use in our previous proof. The minimax approach of \cite{chv1} and the perturbation approach of this paper both necessitate searching for the $d$-tuple $(\phi_j)$ of functions in a compact topological space. In which space can we hope to find it? 

Conclusion \eqref{eq:dt_componentwise_bound} asserts that for the fixed weights $u_{j,k_j}=:w_j$ the functions $(\phi_j)$ satisfy
\begin{equation}
\label{eq:eq:l1_bound}
\norm{(\phi_j)}_{L^1(w_1)\times \cdots \times L^1(w_d)}:=\max_{j=1,\ldots, d} \int \abs{\phi_j} w_j \dmu \leq A.
\end{equation}
Thus, we know {\em a posteriori} that the $d$-tuple $(\phi_j)$ of functions will belong to the norm closed unit ball of the non-reflexive normed product space $L^1(w_1)\times \cdots \times L^1(w_d)$, which fails to be compact in general (in any reasonable topology). The unit ball, however, embeds in the unit ball of the bidual space $(L^\infty(w_1))^{*}\times \cdots \times (L^\infty(w_d))^{*}$ of finitely additive measures, which in turn is weak*-compact. This observation leads to the compactness approach of the first paper: we first search for finitely additive measures satisfying \eqref{eq:eq:l1_bound} -- re-interpreted for finitely additive measures (in place of functions) -- in the space of finitely additive measures equipped with the weak*-topology, and then apply to them the Yosida--Hewitt theory of finitely additive measures to eventually locate functions $(\phi_j)$ satisfying \eqref{eq:eq:l1_bound}.

In this paper, we observe that instead of searching for functions $(\phi_j)$, we can instead search for their powers $(\psi_j):=(\phi_j^{1/p})$, where $p \in (1,\infty)$ is any auxiliary exponent, in which case conclusion \eqref{eq:dt_componentwise_bound} reads
\begin{equation*}
  \int u_{j,k_j} \phi_j \dmu=\int u_{j,k_j} \psi_j^p \dmu \leq A \quad\text{for all $k_j\in K_j$}.
\end{equation*}
This asserts that for fixed weights $u_{j,k_j}=:w_j$ the functions $(\psi_j)$ satisfy
\begin{equation*}
\label{eq:lp_bound}
\norm{(\psi_j)}_{L^p(w_1)\times \cdots \times L^p(w_d)}^p:=\max_{j=1,\ldots,d} \int \abs{\psi_j}^p w_j\leq A
\end{equation*}
in place of \eqref{eq:eq:l1_bound}.
Thus, we know {\em a posteriori} that the $d$-tuple  $(\psi_j)$ of powers will belong to the norm closed ball of the reflexive normed product space $L^p(w_1)\times \cdots \times L^p(w_d)$, which is weakly compact. This observation leads to the compactness approach of the present paper: we search for powers $(\psi_j)$ of functions (in place of functions themselves) in the space $L^p(w_1)\times \cdots \times L^p(w_d)$ equipped with the weak topology. The point of the approach is to use the reflexive range of Lebesgue spaces $L^p$, in place of $L^1$; it transpires that any choice $p\in(1,\infty)$ will work.

Each of the two approaches has its advantages and disadvantages: the approach of this paper  is quite simple but  quite specific as it relies on the problem's compatibility with raising to powers; by contrast, the previous approach via finitely additive measures together with the Yosida--Hewitt theory is quite complicated but quite generic as it is applicable also in problems lacking such compatibility.

\stoptocentries
 \subsection*{Acknowledgements} A.C. was partially supported by Grant CEX2019-000904-S funded by MCIN/AEI/10.13039/501100011033 while visiting ICMAT in Madrid, and by a Leverhulme Fellowship under which part of this research was conducted. T. S. H. is supported by the Academy of Finland (through Projects 297929, 314829, 332740, and 336323). 

\starttocentries
\section{Preliminaries on saturation}\label{sec:preliminaries}

\subsection{Saturation}\label{sec:saturation}

\begin{definition}[Saturation]A non-empty collection $\cs$ of measurable sets {\it saturates} a measure space $(\Omega,\mu)$ if for every measurable set $E$ the following implication holds:
$$
\text{ if
$\mu(S\cap E)=0$ for every set $S\in\cs$, then $\mu(E)=0.$}
$$
\end{definition}
Similarly, a non-empty collection $U$ of measurable functions {\it saturates} a measure space $(\Omega,\mu)$ if the collection $\{\omega \, : \, u(\omega) \neq 0\}, u\in U$, of measurable sets saturates it, or in other words, if for every measurable set $E$ the following implication holds: 
$$
\text{if for every function $u\in U$ we have $u=0$ $\mu$-a.e. on $E$, then $\mu(E)=0$.}
$$
In addition, a collection $U$ of measurable functions {\it strongly saturates} a measure space $(\Omega,\mu)$ if there exists $u\in U$ such that $u>0$ $\mu$-almost everywhere on $\Omega$. Finally, an operator $T:X\to \cm(\Omega)$ {\it (strongly) saturates} a  measure space $(\Omega,\mu)$ if the collection $\{Tf \}_{ f\in X}$ of measurable functions (strongly) saturates it.

\begin{lemma}[Equivalence between saturation and countable cover]\label{lemma:saturation}Let $(\Omega,\mu)$ be a $\sigma$-finite measure space and $\cs$ a non-empty collection of measurable sets. Then the following assertions are equivalent:
\begin{enumerate}
    \item (Saturation) For every measurable set $E$ we have $\mu(E)=0$ whenever $\mu(E\cap S)=0$ for all $S\in\cs$.
    \item (Countable cover) There exists a countable subcollection $\{S_k\}_{k=0}^\infty$ of sets such that $\Omega=\bigcup_{k=0}^\infty S_k$ up to a set of measure zero.
\end{enumerate}
\end{lemma}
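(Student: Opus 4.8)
The plan is to prove the implication $(2)\Rightarrow(1)$ by a direct computation and the implication $(1)\Rightarrow(2)$ by an exhaustion argument. For $(2)\Rightarrow(1)$: assume $\Omega=\bigcup_{k=0}^\infty S_k$ up to a $\mu$-null set, with each $S_k\in\cs$, and let $E$ be a measurable set with $\mu(E\cap S)=0$ for every $S\in\cs$. Then, writing $E=(E\cap\bigcup_k S_k)\cup(E\setminus\bigcup_k S_k)$ and using countable subadditivity on the first piece and the hypothesis $\Omega=\bigcup_k S_k$ (mod null) on the second, one gets $\mu(E)\le\sum_{k}\mu(E\cap S_k)=0$.

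For $(1)\Rightarrow(2)$ I would first pass to a finite measure: using $\sigma$-finiteness, write $\Omega=\bigsqcup_n\Omega_n$ as a countable disjoint union with $\mu(\Omega_n)<\infty$, and set $\nu:=\sum_n 2^{-n}(1+\mu(\Omega_n))^{-1}\mu|_{\Omega_n}$, a finite measure with exactly the same null sets as $\mu$; since both the hypothesis $(1)$ and the conclusion $(2)$ are assertions about null sets, they are unaffected by replacing $\mu$ with $\nu$. Let $\mathcal{C}$ denote the collection of all countable unions $\bigcup_{k=0}^\infty T_k$ with each $T_k\in\cs$; the key structural remark is that $\mathcal{C}$ is closed under countable unions, as a countable union of countable unions is again a countable union. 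Set $M:=\sup\{\nu(C):C\in\mathcal{C}\}\le\nu(\Omega)<\infty$, pick $C_m\in\mathcal{C}$ with $\nu(C_m)\to M$, and let $S^*:=\bigcup_m C_m$; then $S^*\in\mathcal{C}$, so $S^*$ is a countable union of sets from $\cs$, and $\nu(S^*)=M$.

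It then remains to check that $\Omega=S^*$ up to a null set, and this is where hypothesis $(1)$ is used. If not, then $E:=\Omega\setminus S^*$ has $\nu(E)>0$, hence $\mu(E)>0$, so by the contrapositive of $(1)$ there is some $S\in\cs$ with $\mu(E\cap S)>0$, hence $\nu(E\cap S)>0$. But $S^*\cup S\in\mathcal{C}$ and, since $S\setminus S^*=S\cap E$, additivity gives $\nu(S^*\cup S)=\nu(S^*)+\nu(S\cap E)>M$, contradicting the definition of $M$. Thus $\mu(\Omega\setminus S^*)=0$, which is exactly $(2)$. The only delicate points are the attainment of the supremum $M$ — which is precisely where closure of $\mathcal{C}$ under countable unions is needed, so that the diagonal set $S^*$ again lies in $\mathcal{C}$ — and the reduction to a finite measure, which is where $\sigma$-finiteness enters and which is essential so that $M<\infty$ and the strict increase $\nu(S^*\cup S)>M$ yields a genuine contradiction.
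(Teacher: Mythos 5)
Your proof is correct, and for the substantive implication $(1)\Rightarrow(2)$ it takes a genuinely different route from the paper. The paper also reduces to a finite measure (stated as a ``without loss of generality''; your explicit construction of an equivalent finite measure $\nu$ with the same null sets is a clean way to justify it, since both assertions concern only null sets). But where you then run an \emph{exhaustion} argument --- maximise $\nu(C)$ over the class $\mathcal{C}$ of countable unions of members of $\cs$, attain the supremum $M$ by a diagonal countable union $S^*\in\mathcal{C}$, and use saturation to show that any positive-measure remainder $\Omega\setminus S^*$ would let you strictly beat $M$ --- the paper instead runs a \emph{greedy recursive selection}: it picks $S_{k+1}$ to capture at least half of the supremal new mass $m_{k+1}=\sup_{S\in\cs}\mu(S\setminus\bigcup_{l\le k}S_l)$, deduces $\sum_k m_k\le 2\mu(\Omega)<\infty$ so that $m_k\to 0$, and concludes that every $S\in\cs$ is almost covered by $\bigcup_l S_l$, whence saturation finishes. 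Your approach buys a shorter verification (one supremum, one contradiction) and leans on the closure of $\mathcal{C}$ under countable unions; the paper's buys a more constructive flavour with a quantitative decay of the residual masses. One cosmetic point: the conclusion asks for an indexed sequence $\{S_k\}_{k=0}^\infty$ from $\cs$, so if your $S^*$ happens to use only finitely many distinct members of $\cs$ you should repeat sets (the paper handles this by adjoining $\emptyset$ to $\cs$); this is trivial and does not affect correctness.
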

\begin{proof}We prove that saturation implies countable cover; the converse is clear by countable subadditivity of measures.
Without loss of generality, we may assume that $(\Omega,\mu)$ is finite and that $\emptyset \in \cs$. We choose the sets $S_k$ recursively by a greedy algorithm:
\begin{itemize}
    \item We choose $S_0:=\emptyset$.
    \item Given the sets $S_0,\ldots,S_k$, we define 
    $$
    m_{k+1}:=\sup_{S\in\cs} \mu(S\setminus \bigcup_{l=0}^k S_k)
    $$
    and we choose $S_{k+1}$ as a set such that $ \mu(S_{k+1}\setminus \bigcup_{l=0}^k S_k)\geq \frac{1}{2}m_{k+1}$.
\end{itemize}
It remains to check the equality$$
\mu(S\cap (\Omega\setminus \bigcup_{l=0}^\infty S_l))=0 \quad\text{ for every $S\in \cs$},
$$
which by saturation implies that $\mu (\Omega\setminus \bigcup_{l=0}^\infty S_l)=0$ and thereby completes the proof. 

Now, we check the equality. By the definition of the sets $S_k$ and because the sets $S_k\setminus \bigcup_{l=0}^{k-1} S_l$ are pairwise disjoint, we have
$$
\sum_{k=0}^\infty m_k\leq 2 \sum_{k=0}^\infty \mu(S_k\setminus \bigcup_{l=0}^{k-1} S_l)\leq 2 \mu(\Omega)<\infty
$$
and hence $\lim_{k\to \infty} m_k=0.$ Therefore, for every $S\in\cs$, we have
\begin{equation*}
    \begin{split}
   &\mu(S\cap (\Omega\setminus \bigcup_{l=0}^\infty S_l))=\mu(S\setminus \bigcup_{l=0}^\infty S_l)
   \leq \lim_{k\to \infty} \mu(S\setminus \bigcup_{l=0}^k S_l)\leq \lim_{k\to \infty} \sup_{S\in\cs} \mu(S\setminus \bigcup_{l=0}^k S_l)\\
   &= \lim_{k\to \infty}  m_{k+1}=0.
    \end{split}
\end{equation*}
\end{proof}
\begin{corollary}[Preservation of saturation]\label{corollary:saturation}Let $(\Omega,\mu)$ be a $\sigma$-finite measure space. Let $\{u_j:\Omega\to \br\}_{u_j\in U_j}$, $j=1,\ldots,d$, be saturating collections of  measurable functions.  Let $v:\br^d \to \br$ be a Borel-measurable function with the property that $v(a_1,\ldots,a_d)\neq 0$ whenever $a_1\neq 0,\ldots, a_d\neq 0$. Then the collection $\{v(u_1,\ldots,u_d):\Omega \to \br \}_{u_1\in U_1,\ldots,u_d\in U_d}$ of functions is saturating.
\end{corollary}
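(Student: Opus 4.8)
The plan is to reduce everything to the countable-cover characterisation of saturation provided by Lemma~\ref{lemma:saturation}. Since each $U_j$ saturates $(\Omega,\mu)$ and $(\Omega,\mu)$ is $\sigma$-finite, Lemma~\ref{lemma:saturation} (in the direction saturation $\Rightarrow$ countable cover) yields, for each $j$, a countable subcollection $\{u_{j,k}\}_{k\in\bn}\subseteq U_j$ such that $\Omega=\bigcup_{k\in\bn}\{u_{j,k}\neq 0\}$ up to a set of measure zero.

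Next I would pass to the $\bn^d$-indexed family $\{v(u_{1,k_1},\ldots,u_{d,k_d})\}_{(k_1,\ldots,k_d)\in\bn^d}$, noting that each of these functions is measurable, being the composition of the Borel function $v$ with the measurable map $\omega\mapsto(u_{1,k_1}(\omega),\ldots,u_{d,k_d}(\omega))$ into $\br^d$. The defining property of $v$ gives the support inclusion
$$\{v(u_{1,k_1},\ldots,u_{d,k_d})\neq 0\}\supseteq\bigcap_{j=1}^d\{u_{j,k_j}\neq 0\},$$
and taking the union over all multi-indices and distributing the union over the intersection gives
$$\bigcup_{(k_1,\ldots,k_d)\in\bn^d}\{v(u_{1,k_1},\ldots,u_{d,k_d})\neq 0\}\;\supseteq\;\bigcap_{j=1}^d\;\bigcup_{k_j\in\bn}\{u_{j,k_j}\neq 0\}\;=\;\Omega$$
up to a set of measure zero, since a finite intersection of conull sets is conull.

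Finally, the displayed equality exhibits a countable subcollection of the family $\{v(u_1,\ldots,u_d)\}_{u_j\in U_j}$ whose supports cover $\Omega$ up to measure zero, so by the easy implication (countable cover $\Rightarrow$ saturation) of Lemma~\ref{lemma:saturation} the full family saturates $(\Omega,\mu)$, which is the assertion. I do not expect any genuine obstacle in this argument: the only points needing (entirely routine) care are the measurability of the composition $v(u_1,\ldots,u_d)$ and the elementary set-theoretic identity $\bigcup_{(k_1,\ldots,k_d)}\bigcap_j A_{j,k_j}=\bigcap_j\bigcup_{k_j}A_{j,k_j}$; the substance is carried entirely by Lemma~\ref{lemma:saturation}.
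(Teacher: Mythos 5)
Your argument is correct and is essentially identical to the paper's own proof: both reduce to the countable-cover characterisation of Lemma~\ref{lemma:saturation}, apply the hypothesis on $v$ to get the support inclusion, and use the distributive identity $\bigcup_{(k_1,\ldots,k_d)}\bigcap_j A_{j,k_j}=\bigcap_j\bigcup_{k_j}A_{j,k_j}$ before invoking the easy converse direction of the lemma. No gaps.
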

\begin{proof}Since each collection $\{u_j:\Omega\to \br\}_{u_j\in U_j}$ of functions is saturating, by the implication  (1) $\implies$ (2) of Lemma \ref{lemma:saturation}, there exists $\{u_{j,k_j}\}_{k_j=1}^\infty$ such that $\bigcup_{k_j=1}^\infty \{\omega : u_{j,k_j}(\omega)\neq 0\}=\Omega$  up to a set of measure zero. Therefore, by  the assumption on the function $v:\br^d\to\br$, we have,  up to a set of measure zero,
\begin{equation*}
    \begin{split}
        &\bigcup_{k_1=1}^\infty \cdots \bigcup_{k_d=1}^\infty \{\omega: v(u_{1,k_1}(\omega),\ldots,u_{d,k_d}(\omega))\neq 0\}\\
        &\supseteq  \bigcup_{k_1=1}^\infty \cdots \bigcup_{k_d=1}^\infty \{\omega : u_{1,k_1}(\omega)\neq 0\}\cap \cdots\cap \{\omega: u_{d,k_d}(\omega)\neq 0\}\\
        &=\left(\bigcup_{k_1=1}^\infty \{\omega: u_{1,k_1}(\omega)\neq 0\} \right)\cap \cdots \cap \left(\bigcup_{k_d=1}^\infty \{\omega: u_{d,k_d}(\omega)\neq 0\} \right)=\Omega.
    \end{split}
\end{equation*}
Therefore, by the implication   (2) $\implies$ (1) of Lemma \ref{lemma:saturation}, the collection $\{v(u_1,\ldots,u_d):\Omega \to \br \}_{u_1\in U_1,\ldots,u_d\in U_d}$ of functions is saturating.
\end{proof}
\subsection{Upgrading assumptions: probability measure and strong saturation}\label{sec:upgrading}

\begin{lemma}\label{lemma:upgrading}In the Disentanglement Theorem \ref{theorem:disentanglement} we may assume without loss of generality that the measure is a probability (in place of merely $\sigma$-finite) measure and that each family of functions is strongly saturating (in place of merely saturating).
\end{lemma}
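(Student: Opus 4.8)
The plan is to establish the two upgrades one at a time: first reduce a $\sigma$-finite measure to a probability measure, and then, over a probability space, enlarge each family by a single strictly positive function. I would do them in this order, since the second reduction is most convenient over a finite measure.

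For the first reduction, since $(\Omega,\mu)$ is $\sigma$-finite I would pick a measurable $h\colon\Omega\to(0,\infty)$ with $\int_\Omega h\,\dmu=1$ and put $\dnu:=h\,\dmu$, a probability measure with the same null sets as $\mu$; simultaneously I would replace each $u_{j,k_j}$ by $\tilde u_{j,k_j}:=h^{-1}u_{j,k_j}$, which has the same zero set, so that $\{\tilde u_{j,k_j}\}_{k_j\in K_j}$ still saturates $(\Omega,\nu)$. The key observation is that $\sum_j\theta_j=1$ lets the geometric mean absorb the density: for any finitely supported $\{\alpha_{j,k_j}\}$,
\[
\int_\Omega\prod_{j=1}^d\Big|\sum_{k_j}\alpha_{j,k_j}\tilde u_{j,k_j}\Big|^{\theta_j}\dnu=\int_\Omega h^{-\sum_j\theta_j}\Big(\prod_{j=1}^d\Big|\sum_{k_j}\alpha_{j,k_j}u_{j,k_j}\Big|^{\theta_j}\Big)h\,\dmu=\int_\Omega\prod_{j=1}^d\Big|\sum_{k_j}\alpha_{j,k_j}u_{j,k_j}\Big|^{\theta_j}\dmu,
\]
so \eqref{eq:dt_assumption} transfers to $(\Omega,\nu,\tilde u)$ with the same constant $A$; and if $\phi_j\ge0$ satisfy \eqref{eq:dt_decomposition} ($\mu$- and $\nu$-a.e. being the same) with $\int\tilde u_{j,k_j}\phi_j\,\dnu\le A$, then $\int u_{j,k_j}\phi_j\,\dmu=\int\tilde u_{j,k_j}\phi_j\,\dnu\le A$, so the very same $\phi_j$ work for $(\Omega,\mu,u)$. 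Hence it is enough to treat probability measures.

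For the second reduction I would assume $\mu$ a probability measure and fix $j$. By Lemma~\ref{lemma:saturation} there is a countable subfamily $\{u_{j,k_j^{(n)}}\}_{n\ge1}$ covering $\Omega$ up to a null set; since $\mu$ is finite and each $u_{j,k_j^{(n)}}$ is a.e. finite, a routine Borel--Cantelli argument produces positive weights $c_{j,n}$ with $\sum_n c_{j,n}=1$ decaying fast enough (relative to the distribution of $u_{j,k_j^{(n)}}$) that $w_j:=\sum_{n\ge1}c_{j,n}u_{j,k_j^{(n)}}$ is finite $\mu$-a.e.; then $w_j>0$ $\mu$-a.e. by the covering property. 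I would adjoin $w_j$ to the family (index set $\tilde K_j:=K_j\cup\{*_j\}$, $u_{j,*_j}:=w_j$); the enlarged families strongly saturate. They still satisfy \eqref{eq:dt_assumption} with the same $A$: for finitely supported $\{\alpha_{j,k_j}\}_{k_j\in\tilde K_j}$, write $\beta_j:=\alpha_{j,*_j}$ and approximate $w_j$ from below by $w_j^{(N)}:=\sum_{n=1}^N c_{j,n}u_{j,k_j^{(n)}}$; each $\sum_{k_j\in K_j}\alpha_{j,k_j}u_{j,k_j}+\beta_jw_j^{(N)}$ is a finitely supported combination of the original functions with coefficient $\ell^1$-norm at most $\sum_{k_j\in\tilde K_j}|\alpha_{j,k_j}|$ (here $\sum_n c_{j,n}=1$ is used), so \eqref{eq:dt_assumption} bounds its geometric-mean integral by $A\prod_j(\sum_{k_j\in\tilde K_j}|\alpha_{j,k_j}|)^{\theta_j}$, and, letting $N\to\infty$, the integrands converge pointwise a.e., whence Fatou's lemma gives the same bound in the limit. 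Finally, any $\phi_j$ produced by the (upgraded) theorem for the enlarged families satisfy in particular $\int u_{j,k_j}\phi_j\,\dmu\le A$ for all $k_j\in K_j$, which together with \eqref{eq:dt_decomposition} is exactly the conclusion for the original families.

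I expect the only genuine obstacle to lie in the second reduction: guaranteeing that $w_j$ is finite almost everywhere — so that it is an admissible function to add and the enlarged hypothesis is not rendered vacuous — and, more importantly, ensuring that extending \eqref{eq:dt_assumption} from finitely supported families to the infinite combination $w_j$ costs nothing in the constant $A$. The normalization $\sum_n c_{j,n}=1$, combined with Borel--Cantelli (for finiteness) and Fatou (for the constant), is what makes this go through; the first reduction is essentially bookkeeping once one notices the density-absorption afforded by $\sum_j\theta_j=1$.
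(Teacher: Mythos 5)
Your argument is correct and is essentially the paper's own proof of Lemma~\ref{lemma:upgrading}, merely carried out in two sequential steps rather than one: the paper likewise absorbs a density $w>0$ with $\int w\,\dmu=1$ into the geometric mean using $\sum_{j}\theta_j=1$, and adjoins the strictly positive function $\sum_{n}2^{-n}u_{j,n}$ built from the countable cover supplied by Lemma~\ref{lemma:saturation}. Your explicit care in passing from the truncations $w_j^{(N)}$ to $w_j$ (Fatou for the constant, Borel--Cantelli for a.e.\ finiteness) addresses a point the paper glosses over, but introduces no genuinely different idea.
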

\begin{proof}We prove that, given a  $\sigma$-finite measure $\mu$ and  saturating families $U_j$ of functions, we can define an auxiliary {\it probability } measure $\nu$ and auxiliary {\it strongly saturating} families $V_j$ while preserving each of the inequalities \eqref{eq:dt_assumption}, \eqref{eq:dt_decomposition}, and \eqref{eq:dt_componentwise_bound} in the sense that each of them holds for $\mu$ and $U_j$ if and only if it holds for $\nu$ and $V_j$.  Then, the lemma is immediate from the following diagram of implications:

\medskip 

{
\centering
\begin{tikzpicture}

  \node[text width=30mm] (eqau) {Eq. \eqref{eq:dt_assumption} for $\{U_j\}$ and $\mu$};
  \node[right=of eqau,text width=35mm] (eqav) {Eq. \eqref{eq:dt_assumption} for $\{V_j(U,\mu)\}$ and $\nu(U,\mu)$};
   \node[below=of eqav,text width=35mm] (eqcv) {Eqs. \eqref{eq:dt_decomposition} and \eqref{eq:dt_componentwise_bound} for $\{V_j(U,\mu)\}$ and $\nu(U,\mu)$};
\node[below=of eqau,text width=30mm] (eqcu) {Eqs. \eqref{eq:dt_decomposition} and \eqref{eq:dt_componentwise_bound} for $\{U_j\}$ and $\mu$};
 
  \node[between=eqav and eqcv] (dummy1) {};
  \node[right=of dummy1,text width=55mm] (dummy2) {Thm \ref{theorem:disentanglement} for probability measures and strongly saturating functions};

  \draw[implies-implies,double equal sign distance] (eqau) to (eqav);
   \draw[implies-implies,double equal sign distance] (eqcu) to (eqcv);
   \draw[-implies,double equal sign distance,bend left] (eqav) to (eqcv);
\end{tikzpicture}
}

We conclude the proof by defining the auxiliary families and the measure.
Let $U_j$ be saturating families of functions and $\mu$ a $\sigma$-finite measure.
Because of $\sigma$-finiteness, we can find $w>0$ $\mu$-a.e. such that $\int w \dmu=1$. Because each family $\{u_{j,k_j}\}_{k_j\in K_j}$ is saturating, by Lemma \ref{lemma:saturation}, we can find a countable subfamily $\{u_{j,n}\}_{n=1}^\infty$ such that $u_{j}:=\sum_{n=1}^\infty  2^{-n}u_{j,n}>0$ $\mu$-almost-everywhere. 

Now, we define the measure $\dnu:=w\dmu$ and families $V_j:=\{w^{-1}u_{j,k_j}\}_{k_j\in K_j}\cup \{w^{-1}u_j\}$. By definition, the measure $\nu$ is a probability measure and each family $V_j$ is strongly saturating as it includes the almost everywhere strictly positive function $w^{-1}u_j$. Furthermore, by writing out the definitions of $V_j$ and $\nu$, we observe that the inequality \eqref{eq:dt_assumption} holds for $U_j$ and $\mu$ if and only if it holds for $V_j$ and $\nu$.  Indeed, the inequality for $U_j$ and $\mu$ implies the inequality for $V_j$ and $\nu$ as follows:
\begin{equation*}
    \begin{split}
        &\int_\Omega \prod_{j=1}^d \abs{\sum_{k_j\in K_j} \alpha_{j,k_j} (w^{-1}u_{j,k_j})+\beta_{j} (w^{-1} u_j)}^{\theta_j} w\dmu\\
        &=\int_\Omega \prod_{j=1}^d \abs{\sum_{k_j\in K_j} \alpha_{j,k_j} u_{j,k_j}+\beta_{j}\sum_{n=1}^\infty 2^{-n}u_{j,n}}^{\theta_j} \dmu\\
        &        \leq A \prod_{j=1}^d \left( \sum_{k_j\in K_j} \abs{\alpha_{j,k_j}}+ \abs{\beta_j}\sum_{n=1}^\infty 2^{-n}\right)^{\theta_j}\\
        &= A\prod_{j=1}^d \left( \sum_{k_j\in K_j} \abs{\alpha_{j,k_j}}+\abs{\beta_j}\right)^{\theta_j}.
    \end{split}
\end{equation*}
(The converse implication is clear.)
Similarly, given functions $\phi_j$, each of the inequalities  \eqref{eq:dt_decomposition} and \eqref{eq:dt_componentwise_bound} holds for $U_j$ and $\mu$ if and only if it holds for $V_j$ and $\nu$.

\end{proof}

\section{Finitistic case via perturbation and  compactness}\label{sec:finite}
 In this section we introduce and prove the following finite-dimensional variant of Theorem \ref{theorem:disentanglement}:
\begin{proposition}\label{prop:functional_finite}Let $(\Omega,\mu)$ be a $\sigma$-finite measure space. Let $q\in(0,1)$ and  $\theta_j\in(0,1)$ be such that $\sum_{j=1}^d \theta_j=1$. Let $K_j$ be finite indexing sets and $\{u_{j,k_j}\}_{k_j\in K_j}$ be saturating families of non-negative measurable functions indexed by them. Assume that 
\begin{equation}\label{eq:intermediate_maurey}
\int_\Omega \prod_{j=1}^d \abs{\sum_{k_j\in K_j} \alpha_{j,k_j} u_{j,k_j}}^{\theta_jq} \dmu \leq A \prod_{j=1}^d \left( \sum_{k_j\in K_j} \abs{\alpha_{j,k_j}}\right)^{\theta_jq}
\end{equation}
for all families $\{\alpha_{j,k_j}\}_{k_j\in K_j}$ of reals. Then there exist non-negative measurable functions $\phi_j$ such that
\begin{equation}
   \int \left( \prod_{j=1}^d \phi_j^{\theta_j}\right)^{q'}\dmu\leq A
    \label{eq:maurey_functions_conclusion_a}
\end{equation}
and such that for each $j=1,\ldots,d$ we have
\begin{equation}\label{eq:componentwise_bound}
    \int u_{j,k_j} \phi_j \dmu \leq A \quad\text{for all $k_j\in K_j$}.
\end{equation}
\end{proposition}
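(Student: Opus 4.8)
The plan is to produce the tuple $(\phi_j)$ from a maximiser of the functional appearing on the left of \eqref{eq:intermediate_maurey}, reading the componentwise bounds \eqref{eq:componentwise_bound} off its first-order optimality conditions and \eqref{eq:maurey_functions_conclusion_a} off a one-line algebraic identity. Write $w_j:=\sum_{k_j\in K_j}u_{j,k_j}$; since $K_j$ is finite and the family saturates, $w_j>0$ $\mu$-a.e.\ (Lemma~\ref{lemma:saturation}), and, taking all coefficients equal to $1$ in \eqref{eq:intermediate_maurey}, $\prod_{j=1}^d w_j^{\theta_j q}\in L^1(\mu)$; this function will be our majorant throughout. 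We may assume $A>0$. Note also that, since $q\in(0,1)$, the conjugate exponent $q'$ is negative, the exponents $\theta_j q$ lie in $(0,1)$ with $\sum_j\theta_j q=q<1$, and the only arithmetic we shall use is $(q-1)q'=q$.

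Consider
\[
\Phi(\boldsymbol\alpha):=\int_\Omega\prod_{j=1}^d\Bigl(\sum_{k_j\in K_j}\alpha_{j,k_j}u_{j,k_j}\Bigr)^{\theta_j q}\dmu
\]
on the compact convex finite-dimensional set of $d$-tuples $\boldsymbol\alpha$ of probability vectors, one per index $j$. By \eqref{eq:intermediate_maurey}, $\Phi\le A$ there; by dominated convergence (majorant $\prod_j w_j^{\theta_j q}$, concavity of $t\mapsto t^{\theta_j q}$), $\Phi$ is continuous; so $\Phi$ attains its maximum $M\le A$ at some $\boldsymbol\alpha^{*}$. This is the norm-compactness input — trivial here, the ambient space being finite-dimensional — and is the ``domain side'' compactness of Theorem~\ref{theorem:duality}. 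Put $v_j:=\sum_{k_j}\alpha^{*}_{j,k_j}u_{j,k_j}$ and, tentatively, $\phi_j:=v_j^{-1}\prod_{i=1}^d v_i^{\theta_i q}$. Now perturb the maximiser, in the manner used by Pisier and Gillespie: for fixed $j$ and $k_j$ replace $v_j$ by $v_j^\epsilon:=(1-\epsilon)v_j+\epsilon u_{j,k_j}$ and keep the other components; as $\Phi$ does not increase, the difference quotients $q_\epsilon:=\epsilon^{-1}\bigl((v_j^\epsilon)^{\theta_j q}-v_j^{\theta_j q}\bigr)\prod_{i\ne j}v_i^{\theta_i q}$ satisfy $\int_\Omega q_\epsilon\dmu\le 0$, while concavity gives $q_\epsilon\ge-\prod_i v_i^{\theta_i q}\ge-\prod_i w_i^{\theta_i q}\in L^1$. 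Fatou's lemma then yields $\int_\Omega(\liminf_{\epsilon\downarrow0}q_\epsilon)\dmu\le 0$; unwinding the pointwise $\liminf$ gives both that $\mu\bigl(\{v_j=0\}\cap\bigcap_{i\ne j}\{v_i>0\}\bigr)=0$ (the $\liminf$ is $+\infty$ precisely on $\{u_{j,k_j}>0\}\cap\{v_j=0\}\cap\bigcap_{i\ne j}\{v_i>0\}$, which must therefore be null, and the union over $k_j$ is the former set since $w_j>0$ a.e.) and the bound
\[
\int_\Omega u_{j,k_j}\phi_j\dmu=\int_{\{v_j>0\}}u_{j,k_j}\,v_j^{\theta_j q-1}\prod_{i\ne j}v_i^{\theta_i q}\dmu\le\int_{\{v_j>0\}}\prod_{i=1}^d v_i^{\theta_i q}\dmu\le M\le A,
\]
which is \eqref{eq:componentwise_bound}. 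For \eqref{eq:maurey_functions_conclusion_a}, the definition of $\phi_j$ together with $(q-1)q'=q$ gives the pointwise identity $\bigl(\prod_j\phi_j^{\theta_j}\bigr)^{q'}=\prod_{l=1}^d v_l^{\theta_l q}$, whence $\int_\Omega\bigl(\prod_j\phi_j^{\theta_j}\bigr)^{q'}\dmu=M\le A$.

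The one delicate point, which I expect to be the crux, is that the formula for $\phi_j$ is ambiguous on the set where some $v_j$ vanishes. The first-order computation already shows $\{v_j=0\}\subseteq\bigcup_{i\ne j}\{v_i=0\}$ up to null sets, so on $\Omega\setminus G$, with $G:=\bigcap_i\{v_i>0\}$, at least two of the $v_i$ vanish simultaneously. One disposes of $\Omega\setminus G$ either by restricting the whole problem to it and invoking Proposition~\ref{prop:functional_finite} for strictly smaller index sets — on $\{v_j=0\}$ the functions $u_{j,k_j}$ with $k_j\in\supp\alpha^{*}_j$ all vanish, so the effective $K_j$ shrink, and one inducts on $\sum_j|K_j|$ — or, more simply, by declaring $\phi_j:=\mathbf 1_{\{v_j>0\}}\,v_j^{\theta_j q-1}\prod_{i\ne j}v_i^{\theta_i q}$, which makes \eqref{eq:componentwise_bound} exactly the displayed bound above while $\prod_j\phi_j^{\theta_j}$ vanishes off $G$, so that the integral in \eqref{eq:maurey_functions_conclusion_a} collapses to $\int_G\prod_l v_l^{\theta_l q}\dmu\le M\le A$. (Should one wish to keep the $v_j$ strictly positive at every stage, one may first invoke Lemma~\ref{lemma:upgrading} — which preserves finiteness of the $K_j$ — and carry a small parameter $\delta>0$ reserving mass $\delta$ on an a.e.-positive member of each family, solving the extremal problem over $\{\alpha_{j,\ast}\ge\delta\}$ and passing $\delta\to0$ along a convergent subsequence of the $\boldsymbol\alpha^\delta$, finite-dimensional compactness once more; this only defers the degeneracy to the limit.) Beyond this point the argument is soft: compactness of simplices, Fatou and dominated convergence, and the single identity $(q-1)q'=q$.
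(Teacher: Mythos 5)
Your overall strategy is the paper's: maximise the functional over the product of positive unit balls (here, probability vectors), define $\phi_j$ by the formula $v_j^{-1}\prod_i v_i^{\theta_i q}$, get \eqref{eq:componentwise_bound} from first-order perturbation plus Fatou, and get \eqref{eq:maurey_functions_conclusion_a} from the identity $(q-1)q'=q$. The existence of the maximiser, the continuity argument, the lower bound $q_\epsilon\ge -\prod_i w_i^{\theta_iq}$ justifying Fatou, and the derivation of \eqref{eq:componentwise_bound} on $\{v_j>0\}$ are all fine. But there is a genuine gap at exactly the point you flag as delicate, and none of your three proposed fixes closes it. Your one-component-at-a-time perturbation only shows that $\{v_j=0\}\cap\bigcap_{i\neq j}\{v_i>0\}$ is null, i.e.\ that the $v_i$ can only vanish at least two at a time; it does not show that $\Omega\setminus G=\bigcup_j\{v_j=0\}$ is null. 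The paper's Claim~\ref{claim_2} does show this, by perturbing \emph{all} components simultaneously and comparing the gain $\epsilon^{q}\int_{\{\prod_i(T_ig_i)^{\theta_iq}=0\}}\prod_i(T_if_i)^{\theta_iq}\dmu$ against the cost $A((1+\epsilon)^q-1)=O(\epsilon)$; since $q<1$ this forces the integral to vanish for every $(f_i)$, and then saturation (Corollary~\ref{corollary:saturation}) kills the set. This is the one step of the whole proof where $q<1$ is used in an essential way. Your argument uses only $\theta_jq<1$ and would run verbatim at $q=1$, where the paper's Example (disjointly supported $u,v$) shows the degenerate set genuinely can have positive measure — so no first-order argument of your type can rule it out.

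Consequently your handling of $\Omega\setminus G$ must carry real weight, and it does not. The ``simpler'' fix, $\phi_j:=\mathbf 1_{\{v_j>0\}}v_j^{\theta_jq-1}\prod_{i\ne j}v_i^{\theta_iq}$, makes $\prod_j\phi_j^{\theta_j}=0$ off $G$, and since $q'=q/(q-1)<0$ we have $0^{q'}=+\infty$: the integral in \eqref{eq:maurey_functions_conclusion_a} does not ``collapse'' to $\int_G$, it becomes $+\infty$ if $\mu(\Omega\setminus G)>0$. (Interpreting $0^{q'}$ as $0$ instead would make the proposition vacuous — $\phi_j\equiv 0$ would work — and would destroy the limit $q\uparrow 1$ used in Section~\ref{sec:infinite}.) The induction on $\sum_j|K_j|$ is not carried out and has a structural problem: the effective index set $K_j$ shrinks only on $\{v_j=0\}$, not on all of $\Omega\setminus G$, so one is forced to partition $\Omega\setminus G$ and glue, and gluing solutions on $N$ pieces yields \eqref{eq:componentwise_bound} and \eqref{eq:maurey_functions_conclusion_a} with constant $NA$ rather than $A$. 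The $\delta$-regularisation, as you concede, merely defers the degeneracy to the limit. The correct repair is to insert the paper's Claim~\ref{claim_2}: perturb all $d$ components at once, split the integral over $\{\prod_i v_i>0\}$ and its complement, let $\epsilon\to 0$ using $\epsilon^q\gg\epsilon$, and invoke saturation of the product family to conclude $G=\Omega$ up to a null set, after which your formulas and estimates apply globally.
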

The conjugate exponent $q'$ of $q\in(0,1]$ is defined  by  $q':=\frac{q}{q-1}$ when $q\in(0,1)$ and $q':=-\infty$ (not $+ \infty$) when $q=1$.
 When $q=1$, inequality \eqref{eq:maurey_functions_conclusion_a} is interpreted as  $\essinf \prod_{j=1}^d \phi_j^{\theta_j}\geq 1.$ Thus, Theorem \ref{theorem:disentanglement} corresponds to the case $q=1$ and arbitrary indexing sets, whereas Proposition ~\ref{prop:functional_finite} corresponds to the case $0<q<1$ and finite indexing sets. Theorem~\ref{theorem:disentanglement} implies Proposition~\ref{prop:functional_finite} by an easy argument\footnote{For the passage from $q=1$ to $0<q<1$ we argue as in the proof of \cite[Theorem 2.3]{chv1} by introducing a ``dummy'' parameter as follows. We observe that the $d$-dimensional estimate of the form \eqref{eq:intermediate_maurey} can be viewed as the $(d+1)$-dimensional estimate of the form \eqref{eq:dt_assumption} by writing the former estimate as
$$
\int_\Omega \prod_{j=1}^{d} \abs{\sum_{k_j\in K_j} \alpha_{j,k_j} u_{j,k_j}}^{\theta_jq} \abs{\sum_k \beta_k 1}^{1-q}\dmu \leq A \prod_{j=1}^{d+1} \left( \sum_{k_j\in K_j} \abs{\alpha_{j,k_j}}\right)^{\theta_jq} \abs{\sum_k \beta_k}^{1-q}. 
$$
Feeding the corresponding choice $\tilde{\theta}_j:=\theta_j q$ and $\{\tilde{u}_j\}:=\{u_j\}$ for $j=1,\ldots,d$ and $\tilde{\theta}_{d+1}:=(1-q)$ and $\{\tilde{u}_{d+1}\}:=\{1\}$ into Theorem \ref{theorem:disentanglement} gives the existence of $(\tilde{\phi}_j)$ such that $$\prod_{j=1}^{d+1}\tilde{\phi}_j^{\tilde{\theta_j}}\geq 1  \quad\text{ and } \quad 
\int \tilde{u}_{j,k_j}\tilde{\phi_j} \dmu\leq A,$$ 
inequalities which are written out as
\eqref{eq:maurey_functions_conclusion_a} and \eqref{eq:componentwise_bound}. The passage from arbitrary to finite indexing set is of course immediate.}.
The converse implication (consisting of the passage both from $q<1$ to $q=1$ and from finite indexing sets to arbitrary indexing sets) is more difficult and is proved via the finite intersection property and compactness 
in Section \ref{sec:infinite}. 

Thus, the original problem of proving Theorem \ref{theorem:disentanglement} can be reduced to the problem of proving Proposition \ref{prop:functional_finite}.
The point of introducing this auxiliary problem is that it is more amenable to a variational argument than the original problem is: the condition that the indexing set is finite (in place of arbitrary) ensures the existence of a maximiser, while the condition $0<q<1$ (in place of $q=1$) ensures that every maximiser is non-vanishing almost everywhere. 

The solution to the auxiliary problem is most naturally written in the language of positive operators on lattices\footnote{The disentanglement theorem can be phrased in terms of positive operators on lattices  or, equivalently, in terms of families of functions. (The explicit formulations and their  equivalences are given in \cite[Section 2]{chv2}, but are not needed in this paper.)
Each formulation affords a natural case of a finite character: the case of finite families of functions in function-formulation, the case of finite-dimensional lattices in operator-formulation. 

In this paper we work with both the operator and function formulations. However, in Section \ref{sec:infinite}, the fundamental structure of the problem becomes apparent, and in this context it is much more natural to work in the function formulation. This structure reveals itself in the following essential difference in finite closure: whereas the smallest family containing  finitely many finite families of functions remains finite, the smallest sub-vector-lattice containing finitely many finite-dimensional sub-vector-lattices may fail to remain finite-dimensional. It is because of this closure property for families of functions we can use the finite intersection property in Section \ref{sec:infinite}.

Nevertheless in this Section \ref{sec:finite}, the proof is more naturally written in terms of operators (as in Proposition \ref{proposition:maurey}) rather than in terms of functions (as in Proposition \ref{prop:functional_finite}). The particular case with $T_j:\ell^1(K_j) \to \cm(\Omega)$ given by $T_j(\{\alpha_{j,k_j}\}_{k_j\in K_j}):=\sum_{k_j\in K_j} \alpha_{j,k_j} u_{j,k_j}$ of Proposition \ref{proposition:maurey} then corresponds precisely to Proposition \ref{prop:functional_finite}.
}:

\medskip

\begin{proposition}[cf. Theorem 2.3. in \cite{chv1}]\label{proposition:maurey}Let $(\Omega,\mu)$ be a $\sigma$-finite measure space. Let $q\in(0,1)$ and $\theta_j\in(0,1)$ be such that $\sum_{j=1}^d \theta_j=1$. Let each $X_j$ be a  finite-dimensional normed lattice and each $T_j:X_j\to \cm(\Omega)$ be a saturating positive linear operator.  Assume that
\begin{equation}\label{eq:finite_a}
    \int \prod_{j=1}^d \abs{T_jf_j}^{\theta_jq}\dmu\leq A \prod_{j=1}^d \norm{f_j}_{X_j}^{\theta_jq}\quad\text{for all $f_j\in X_j$}.
\end{equation}
Then there exist non-negative measurable functions $\phi_j$ such that
\begin{equation}
    \int \left( \prod_{j=1}^d \phi_j^{\theta_j}\right)^{q'}\dmu\leq A
    \label{eq:maurey_conclusion_a}
\end{equation}and such that for each $j=1,\ldots,d$ we have
\begin{equation}
      \int \abs{T_jf_j} \phi_j \dmu \leq A \norm{f_j}_{X_j} \quad \text{ for all $f_j\in X_j$}. \label{eq:maurey_conclusion_b}
\end{equation}
 
\end{proposition}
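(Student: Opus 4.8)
The plan is to produce the weights explicitly. With $C_j := \{f \in X_j : f \ge 0,\ \norm{f}_{X_j} \le 1\}$, I would take a maximiser $(f_j^*)$ of the functional $N\big((f_j)\big) := \int_\Omega \prod_{j=1}^d (T_j f_j)^{\theta_j q}\,\dmu$ over the compact convex set $C := \prod_j C_j$, and then \emph{define} $\phi_j := F_0/(T_j f_j^*)$, where $F_0 := \prod_{i=1}^d (T_i f_i^*)^{\theta_i q}$ --- a genuine non-negative measurable function once we know $T_j f_j^* > 0$ a.e. To see that $N$ is finite and continuous on $C$, note that a finite-dimensional lattice has a positive order unit $u_j$ (take $u_j := \sum_i \abs{b_i}$ for any basis $\{b_i\}$), so $f \ge 0$, $\norm{f}_{X_j}\le 1$ forces $0 \le f \le c_j u_j$ for some constant $c_j$; positivity of $T_j$ then provides the majorant $\prod_j (c_j T_j u_j)^{\theta_j q}$, which lies in $L^1(\mu)$ by hypothesis \eqref{eq:finite_a} applied to $u_j$. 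Hence the maximum $M$ is attained, with $M \le A$ by \eqref{eq:finite_a}. Moreover saturation of $T_j$ forces $T_j u_j > 0$ a.e. (if $T_j u_j$ vanished on $E$ so would every $T_j f$), so, setting $\tilde u_j := u_j/\norm{u_j}_{X_j}$ (still $T_j\tilde u_j>0$ a.e.), we get $M \ge N\big((\tilde u_j)\big) > 0$; consequently $\norm{f_j^*}_{X_j} = 1$ for every $j$, since otherwise scaling up a coordinate of $(f_j^*)$ would strictly increase $N$.

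The decisive step --- this is the ``perturbation'', and the only place the auxiliary restriction $q < 1$ enters --- is that the maximiser is non-degenerate: $T_j f_j^* > 0$ $\mu$-a.e. for every $j$. If not, say $T_{j_0} f_{j_0}^* = 0$ on a set $E$ with $\mu(E) > 0$, I would perturb towards the strictly positive direction $\tilde u_j$: for $t \in [0,1]$ the point $\big((1-t) f_j^* + t \tilde u_j\big)_j$ lies in $C$, and using $(1-t) T_j f_j^* + t T_j \tilde u_j \ge (1-t) T_j f_j^*$ everywhere together with $\ge t\, T_j \tilde u_j$ on $E$ yields $N\big(((1-t) f_j^* + t\tilde u_j)_j\big) \ge (1-t)^q M + t^q c$, where $c := \int_E \prod_j (T_j \tilde u_j)^{\theta_j q}\,\dmu \in (0,\infty)$. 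As $q < 1$, the gain $t^q c$ outweighs the loss $\big(1-(1-t)^q\big) M = O(t)$ for small $t > 0$, contradicting the maximality of $M$.

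Granting this, I would extract the Euler--Lagrange inequality. For $(g_j) \in C$ set $\psi(t) := N\big(((1-t) f_j^* + t g_j)_j\big)$, so $\psi(t) \le \psi(0) = M$ on $[0,1]$; writing $F_t$ for the pointwise integrand, the elementary estimate $(1-t)^q \ge 1-t$ gives $F_t \ge (1-t)^q F_0 \ge F_0 - tF_0$, so $(F_t - F_0)/t \ge -F_0 \in L^1(\mu)$ for all small $t$, while non-degeneracy makes $F_t$ differentiable in $t$ at $0$ with $\frac{d}{dt}F_t|_{t=0} = q\big(\sum_j \theta_j \phi_j T_j g_j - F_0\big)$. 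Fatou's lemma then yields $\int q\big(\sum_j \theta_j \phi_j T_j g_j - F_0\big)\dmu \le \liminf_{t\to 0^+}(\psi(t)-M)/t \le 0$, that is $\sum_j \theta_j \int \phi_j T_j g_j\,\dmu \le M$ for all $(g_j) \in C$ --- note that although $N$ need not be concave, maximality alone suffices here. Taking $g_j = f_j^*$ for $j \ne j_0$ and using $\int \phi_j T_j f_j^*\,\dmu = \int F_0\,\dmu = M$ collapses this to $\int \phi_{j_0} T_{j_0} g\,\dmu \le M$ for every $g \ge 0$ with $\norm{g}_{X_{j_0}} \le 1$; homogeneity together with $\abs{T_{j_0} f} \le T_{j_0}\abs{f}$ and $\norm{\abs f}_{X_{j_0}} = \norm{f}_{X_{j_0}}$ then gives \eqref{eq:maurey_conclusion_b} (with the better constant $M \le A$). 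Finally \eqref{eq:maurey_conclusion_a} is mere bookkeeping: $\prod_j \phi_j^{\theta_j} = \prod_i (T_i f_i^*)^{\theta_i(q-1)}$, so, since $(q-1)q' = q$, $\big(\prod_j \phi_j^{\theta_j}\big)^{q'} = \prod_i (T_i f_i^*)^{\theta_i q} = F_0$, whence $\int \big(\prod_j \phi_j^{\theta_j}\big)^{q'}\dmu = \int F_0\,\dmu = M \le A$.

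The main obstacle I anticipate is making this variational argument rigorous despite the poor regularity of $N$: its integrand is only log-concave (not concave) in $(f_j)$, and $t \mapsto \abs{a+tb}^{\theta_j q}$ fails to be differentiable where $a = 0$, so one cannot naively differentiate under the integral sign at an arbitrary critical point. The two devices that rescue the argument are the non-degeneracy of the maximiser (forced by $q < 1$, which removes the non-differentiability) and the one-sided bound $(F_t - F_0)/t \ge -F_0 \in L^1(\mu)$ (which licenses Fatou's lemma); together they are the finite-dimensional incarnation of the perturbation trick used by Pisier \cite{pisier} and Gillespie \cite{gillespie}.
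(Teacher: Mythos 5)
Your proposal is correct and follows essentially the same route as the paper's proof of Proposition \ref{proposition:maurey}: existence of a maximiser by compactness and continuity, non-degeneracy of the maximiser via a perturbation that exploits $q<1$ (the $t^q$ gain beating the $O(t)$ loss), the same defining formula $\phi_j=\frac{1}{T_jf_j^*}\prod_i(T_if_i^*)^{\theta_i q}$, and the Euler--Lagrange inequality via a one-sided perturbation controlled by Fatou's lemma. The only differences are cosmetic: you extract a strictly positive direction from an order unit of the finite-dimensional lattice rather than invoking Corollary \ref{corollary:saturation}, and you perturb by convex combinations in all coordinates rather than additively in one coordinate.
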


\begin{remark} Proposition \ref{proposition:maurey} is called a {\em Multilinear Maurey factorisation theorem for positive operators on finite-dimensional lattices} in the the nomenclature of our previous papers \cite{chv1,chv2}.
\end{remark}
\begin{proof}[Proof of Proposition \ref{proposition:maurey}] 
In the case that each space $X_j$ is one-dimensional, we can take any non-zero vectors $g_j\in X_j$ with $g_j\geq 0$ and $\norm{g_j}_{X_j}\leq 1$ and write assumption \eqref{eq:finite_a} as
$$
\int \prod_{j=1}^d \abs{T_jg_j}^{\theta_jq}\dmu=\int \abs{T_jg_i}\cdot \left( \frac{1}{\abs{T_jg_i}}\prod_{j=1}^d \abs{T_jg_j}^{\theta_jq}\right)\dmu \leq A;
$$
we observe that conclusions \eqref{eq:maurey_conclusion_a} and \eqref{eq:maurey_conclusion_b} are trivially satisfied by the functions $(\phi_j)$ given by
\begin{equation}
    \label{eq:formula_phi}
\phi_i(g):=\frac{1}{T_ig_i}\left(\prod_{j=1}^d (T_jg_j)^{q\theta_j}\right).
\end{equation}

We prove that, in the finite-dimensional case, the conclusion of Proposition~\ref{proposition:maurey} is satisfied by the functions $\phi_j(g)$ defined by this same formula \eqref{eq:formula_phi} for {\em any} maximiser $(g_j)$. The proof proceeds by a sequence of four claims: first, in Claim \ref{claim_1}, we prove that a maximiser $(g_j)$ for inequality \eqref{eq:finite_a} exists. Second, in Claim \ref{claim_2}, we check that $(T_jg_j)>0$ and so we can define $\phi_i(g)$ by formula \eqref{eq:formula_phi} since no division by zero occurs. Third in Claim \ref{claim_3}, we observe that the inequality  \eqref{eq:maurey_conclusion_a} is satisfied by $(\phi_i(g))$ due its specific defining formula. Finally, in Claim \ref{claim_4}, we prove via perturbation that for each $j$, inequality \eqref{eq:maurey_conclusion_b} for general $f_j \in X_j$ is satisfied by $\phi_j(g)$.

\begin{claim}[Maximiser exists]\label{claim_1} Assume that $A$ is the least constant in the hypothesis \eqref{eq:finite_a}. Then there exist $g_j\in X_j$ with $0\leq g_j$ and $\norm{g_j}_{X_j}\leq 1$ such that 
$$\int \prod_{j=1}^d \abs{T_j g_j}^{\theta_jq}\dmu=A.$$
\end{claim}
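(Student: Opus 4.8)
The plan is to set up a maximising sequence and extract a convergent subsequence using finite-dimensionality of the $X_j$, then pass to the limit by Fatou's lemma. First, I would fix $A$ to be the least constant in \eqref{eq:finite_a}, so that by definition there is a sequence $(g_j^{(n)})_n$ with $g_j^{(n)} \in X_j$, $0 \le g_j^{(n)}$, $\norm{g_j^{(n)}}_{X_j} \le 1$, such that $\int \prod_{j=1}^d \abs{T_j g_j^{(n)}}^{\theta_j q}\dmu \to A$ as $n \to \infty$. (One should note first that $A < \infty$ follows from the hypothesis; in fact one can even write down an explicit finite bound by testing \eqref{eq:finite_a} on a fixed tuple of unit vectors, exploiting positivity of $T_j$ to control $\abs{T_j f_j}$ for all unit $f_j$ in terms of finitely many fixed functions, using that $X_j$ is finite-dimensional so its unit ball is the convex hull of finitely many extreme points up to a bounded factor.)

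Next, since each closed unit ball $B_{X_j}$ is compact (finite dimension) and the positive cone is closed, after passing to a subsequence I may assume $g_j^{(n)} \to g_j$ in $X_j$ for each $j$, with $0 \le g_j$ and $\norm{g_j}_{X_j} \le 1$. Because $T_j : X_j \to \cm(\Omega)$ is linear on a finite-dimensional space, it is automatically ``continuous'' into $\cm(\Omega)$ in the sense that $T_j g_j^{(n)} \to T_j g_j$ in measure on sets of finite measure — indeed, writing $g_j^{(n)}$ in a fixed basis, $T_j g_j^{(n)}$ is a fixed linear combination (with converging scalar coefficients) of the finitely many basis images $T_j e_{j,i}$, so $T_j g_j^{(n)} \to T_j g_j$ $\mu$-a.e. along a further subsequence. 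Hence $\prod_{j=1}^d \abs{T_j g_j^{(n)}}^{\theta_j q} \to \prod_{j=1}^d \abs{T_j g_j}^{\theta_j q}$ $\mu$-a.e.

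Finally I would apply Fatou's lemma:
\[
\int \prod_{j=1}^d \abs{T_j g_j}^{\theta_j q}\dmu \le \liminf_{n\to\infty} \int \prod_{j=1}^d \abs{T_j g_j^{(n)}}^{\theta_j q}\dmu = A,
\]
while the reverse inequality $\int \prod_{j=1}^d \abs{T_j g_j}^{\theta_j q}\dmu \le A$ holding for \emph{all} admissible tuples is precisely the hypothesis \eqref{eq:finite_a} (with $A$ the least constant), applied to $(g_j)$; but we need equality, which follows because $A$ is the \emph{supremum} of the left-hand side over unit vectors, and we have produced a tuple attaining the value $A$ in the limit while simultaneously being admissible. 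Thus $\int \prod_{j=1}^d \abs{T_j g_j}^{\theta_j q}\dmu = A$, as claimed.

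The main obstacle I anticipate is the passage to an a.e.-convergent subsequence of $T_j g_j^{(n)}$ and the justification that Fatou applies without a uniform integrable majorant — but this is handled cleanly by the finite-dimensionality trick above (reduce to convergence of finitely many scalar coefficients, giving genuine a.e. convergence, not merely convergence in measure), after which Fatou needs no majorant since the integrands are non-negative. A minor subtlety is ensuring $A > 0$ (so the maximiser is not the trivial zero tuple causing degeneracy later); but if $A = 0$ then all $T_j g_j = 0$ a.e. for all unit $g_j \geq 0$, contradicting saturation of the families $\{T_j f_j\}$, so in fact $A > 0$.
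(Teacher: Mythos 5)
There is a genuine gap at the final step. Fatou's lemma, applied to the a.e.-convergent non-negative integrands, gives
$$\int \prod_{j=1}^d \abs{T_j g_j}^{\theta_j q}\,\mathrm{d}\mu \;\leq\; \liminf_{n\to\infty} \int \prod_{j=1}^d \bigabs{T_j g_j^{(n)}}^{\theta_j q}\,\mathrm{d}\mu \;=\; A,$$
which is an inequality in the \emph{same} direction as the one you already have from the hypothesis (the ``reverse inequality'' you state is again $\leq A$, not $\geq A$). To conclude that the limit tuple is a maximiser you need the lower bound $\int \prod_j \abs{T_j g_j}^{\theta_j q}\,\mathrm{d}\mu \geq A$, i.e.\ that the functional does not drop in the limit; Fatou cannot give this (it is precisely the statement that mass can be lost, not gained, in the limit). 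Your closing sentence ---  that equality ``follows because $A$ is the supremum \dots and we have produced a tuple attaining the value $A$ in the limit while simultaneously being admissible'' --- simply asserts the conclusion: the sequence values tend to $A$, but that does not by itself say anything about the value at the limit tuple.

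The missing ingredient is (upper semi-)continuity of $I(f_1,\dots,f_d):=\int\prod_j\abs{T_jf_j}^{\theta_jq}\,\mathrm{d}\mu$. The paper supplies this by a telescoping estimate: using $\bigabs{\,\abs{a}^{s}-\abs{b}^{s}\,}\leq\abs{a-b}^{s}$ for $s=\theta_kq\in(0,1)$ and linearity of $T_k$, each telescoped difference is bounded via the hypothesis \eqref{eq:finite_a} by $A\,\norm{f_k-g_k}_{X_k}^{\theta_kq}$ times norms of the other entries, so $I$ is genuinely norm-continuous on the compact set $(B_{X_1})_+\times\cdots\times(B_{X_d})_+$ and the extreme value theorem finishes the proof. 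Alternatively, you could rescue your sequential argument by upgrading Fatou to dominated convergence: the majorant you gesture at in your parenthetical remark (dominating $\abs{T_jf_j}$ for all $f_j$ in the unit ball by $T_jh_j$ for a fixed $h_j$ built from a basis, using positivity) yields an integrable dominating function $\prod_j(T_jh_j)^{\theta_jq}$ by the hypothesis, and then the limit along your a.e.-convergent subsequence is an honest limit, giving equality with $A$. As written, however, the claim ``Fatou needs no majorant'' is exactly where the argument fails: without a majorant you only get the useless direction.
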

\begin{proof}[Proof of Claim \ref{claim_1}] We define the function $I:X_1\times \cdots \times X_d\to \br$ by setting $$I(f_1,\ldots,f_d):=\int \prod_{j=1}^d \abs{T_jf_j}^{\theta_jq}\dmu.$$ 
As usual, we equip the product space $X_1\times \cdots \times X_d$ with the norm $\norm{(f_1,\ldots,f_d)}_{X_1\times\cdots\times X_d}:=\max_{j=1,\ldots,d} \norm{f_j}_{X_j}$. The function $I:X_1\times \cdots \times X_d\to \br$ is continuous on the normed product space $X_1\times \cdots X_d$ because of the estimate
\begin{equation*}
    \begin{split}
        &\abs{I(f_1,\ldots,f_d)-I(g_1,\ldots,g_d)}\\
        &\leq \sum_{k=1}^d \abs{I(g_1,\ldots,g_{k-1},f_k,f_{k+1},\ldots,f_d)-I(g_1,\ldots,g_{k-1},g_k,f_{k+1},\ldots,f_d)}\\
        &\leq \sum_{k=1}^d I(g_1,\ldots,g_{k-1},(f_k-g_k),f_{k+1},\ldots,f_d)\\
        &\leq A \sum_{k=1}^d \prod_{j=1}^{k-1} \norm{g_j}_{X_j}^{\theta_jq} \cdot \norm{f_k-g_k}_{X_k}^{\theta_kq} \cdot \prod_{j=k+1}^d \norm{f_j}_{X_j}^{\theta_jq}.
    \end{split}
\end{equation*}
Using standard notation, we write $(B_{X_j})_+:=\{ f_j\in X_j : f_j\geq 0 \text{ and } \norm{f_j}_{X_j}\leq 1\}$. The product set $(B_{X_1})_+\times \cdots \times (B_{X_d})_+$ is compact because it is a product of compact sets.

By continuity and compactness, there exists $(g_1,\ldots,g_d)\in (B_{X_1})_+\times \cdots \times (B_{X_d})_+$ such that
$$
\int \prod_{j=1}^d \abs{T_jg_j}^{\theta_jq}\dmu=\sup_{f_j\in (B_{X_j})_+}\int \prod_{j=1}^d \abs{T_jf_j}^{\theta_jq}\dmu.
$$
Because of positivity and homogeneity, and because $A$ is the least constant in the hypothesis, we have
$$
\sup_{f_j\in (B_{X_j})_+}\int \prod_{j=1}^d \abs{T_jf_j}^{\theta_jq}\dmu=\sup_{f_j\in X_j} \frac{\int \prod_{j=1}^d \abs{T_jf_j}^{\theta_jq}\dmu}{ \prod_{j=1}^d \norm{f_j}_{X_j}^{\theta_jq}}=A.
$$
The proof of the claim is completed.
\end{proof}

Now, we define $\phi_i(g)$ by formula \eqref{eq:formula_phi} for a maximiser $(g_j)$, where no division by zero occurs thanks to Claim \ref{claim_2}.

\begin{claim}[Maximising function is strictly positive almost everywhere]\label{claim_2}Assume that $(g_1,\ldots,g_d)\in (B_{X_1})_+\times\cdots \times (B_{X_d})_+$ is a maximiser. Then for each $j=1,\ldots,d$ we have $Tg_j>0$ $\mu$-almost everywhere.
\end{claim}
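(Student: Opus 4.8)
The plan is to argue by contradiction, perturbing the maximiser in \emph{all $d$ coordinates at once}. Throughout we may assume $\mu(\Omega)>0$, since otherwise everything is trivial, and we will see that then $A>0$.

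First I would record a finite-dimensional strengthening of saturation: each $T_j$ in fact \emph{strongly} saturates. Indeed, since $X_j$ is a finite-dimensional normed lattice its positive cone is generating, so fixing a basis and writing each basis vector as $e=e^+-e^-$ with $e^\pm\ge 0$, the sum $\psi_j$ of all these $e^\pm$ satisfies $\psi_j\ge 0$ and, by positivity of $T_j$, $\{T_jf\neq 0\}\subseteq\{T_j\psi_j>0\}$ up to null sets for every $f\in X_j$. Saturation of $T_j$ then forces $T_j\psi_j>0$ $\mu$-a.e.; this is the only place finite-dimensionality enters, and it is the finitary analogue of the strong-saturation upgrade of Lemma~\ref{lemma:upgrading}. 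In particular $I(\psi_1,\dots,\psi_d)=\int\prod_j(T_j\psi_j)^{\theta_jq}\dmu>0$, so $A>0$ (recall $I(g_1,\dots,g_d)=A$ by Claim~\ref{claim_1}).

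Now suppose $F:=\Omega\setminus\bigcap_{j=1}^d\{T_jg_j>0\}$ has positive measure; since each $T_jg_j\ge 0$, this is precisely the negation of the claim. I would perturb by setting $g_j^t:=g_j+t\psi_j\ge g_j\ge 0$ for small $t>0$ and estimate $I(g_1^t,\dots,g_d^t)$ from below by splitting $\Omega=S\cup F$ with $S:=\Omega\setminus F$. On $S$, monotonicity gives $\prod_j|T_jg_j^t|^{\theta_jq}\ge\prod_j(T_jg_j)^{\theta_jq}$, so the integral over $S$ is at least $\int_\Omega\prod_j(T_jg_j)^{\theta_jq}\dmu=A$ (the integrand vanishes on $F$). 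On $F$, dropping the $T_jg_j$ terms and using $\sum_j\theta_j=1$ gives $\prod_j|T_jg_j^t|^{\theta_jq}\ge t^q\prod_j(T_j\psi_j)^{\theta_jq}$, so the integral over $F$ is at least $ct^q$ with $c:=\int_F\prod_j(T_j\psi_j)^{\theta_jq}\dmu>0$. Hence $I(g_1^t,\dots,g_d^t)\ge A+ct^q$. On the other hand, feeding $g_j^t$ into the homogeneous form of hypothesis~\eqref{eq:finite_a}, together with $\|g_j^t\|_{X_j}\le 1+tM$ where $M:=\max_j\|\psi_j\|_{X_j}$, and using $\sum_j\theta_j=1$ and concavity of $s\mapsto s^q$, gives $I(g_1^t,\dots,g_d^t)\le A(1+tM)^{q}\le A(1+qMt)$. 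Comparing the two bounds yields $ct^q\le AqMt$, i.e.\ $c\le AqM\,t^{1-q}$ for all small $t>0$; letting $t\to 0^+$ and using $q<1$ forces $c\le 0$, a contradiction. Therefore $\mu(F)=0$, which is the claim.

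The only real obstacle is that the product structure makes a single-coordinate perturbation useless — if we enlarged only $g_{j_0}$, the other factors could still vanish on $F$, so nothing is gained there. Enlarging all $d$ coordinates simultaneously is what produces a genuine gain on $F$, and it is exactly the normalisation $\sum_j\theta_j=1$ that makes this gain scale like $t^q$, which in turn beats the $O(t)$ cost of the renormalisation $\|g_j^t\|\le 1+tM$ precisely because $q<1$. This is the structural reason the auxiliary exponent $q<1$ was introduced.
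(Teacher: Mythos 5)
Your proof is correct, and its variational core is the same as the paper's: perturb the maximiser in all $d$ coordinates simultaneously, split $\Omega$ into the set where $\prod_j(T_jg_j)^{\theta_jq}$ is positive and its complement $F$, bound the perturbed functional below by $A+(\mathrm{const})\,t^q$ and above by $A(1+O(t))$, and let $t\to 0^+$ using $q<1$. Where you diverge is in how saturation is brought to bear. The paper perturbs in an \emph{arbitrary} direction $(f_j)$, deduces that $\int_F\prod_j(T_jf_j)^{\theta_jq}\,\mathrm{d}\mu=0$ for \emph{every} choice of $(f_j)$, and only then invokes Corollary~\ref{corollary:saturation} (hence Lemma~\ref{lemma:saturation} and countable covers) to conclude $\mu(F)=0$. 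You instead upgrade saturation to \emph{strong} saturation first, exhibiting a single $\psi_j\ge 0$ with $T_j\psi_j>0$ a.e.\ --- the domination $|T_jf|\lesssim T_j\psi_j$ coming from positivity and a finite positive generating set for $X_j$ --- and then perturb in this one fixed direction, getting a strictly positive gain $c\,t^q$ on $F$ and an immediate contradiction. This is a legitimate finite-dimensional shortcut: it trades Corollary~\ref{corollary:saturation} for the observation that a saturating positive operator on a finite-dimensional lattice is automatically strongly saturating (the finitary analogue of Lemma~\ref{lemma:upgrading}), and it makes the contradiction quantitative ($c\le AqM\,t^{1-q}$) rather than passing through a family of vanishing integrals. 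Both arguments exploit $q<1$ in exactly the same place, namely that $t^q$ beats the $O(t)$ cost of renormalisation.
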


\begin{proof}[Proof of Claim \ref{claim_2}]Fix vectors $(f_1,\ldots,f_d)\in (B_{X_1})_+\times\cdots\times (B_{X_d})_+$.
 By applying assumption \eqref{eq:finite_a} to the vectors $\tilde{g}_j:=g_j+\epsilon f_j$, by splitting the measure space, by positivity, and by the attainment of the constant, we obtain
\begin{equation*}
    \begin{split}
   & A (1+\epsilon)^q \geq \int \prod_{j=1}^d (T_j(g_j+\epsilon f_j))^{\theta_j q} \dmu\\
   &=\int_{\{\prod_{j=1}^d (T_jg_j)^{\theta_j q}>0\}} \prod_{j=1}^d (T_j(g_j+\epsilon f_j))^{\theta_j q} \dmu+\int_{\{\prod_{j=1}^d (T_jg_j)^{\theta_j q}=0\}} \prod_{j=1}^d (T_j(g_j+\epsilon f_j))^{\theta_j q} \dmu\\
   &\geq \int_{\{\prod_{j=1}^d (T_jg_j)^{\theta_j q}>0\}} \prod_{j=1}^d (T_jg_j)^{\theta_j q}\dmu+ \epsilon^q \int_{\{\prod_{j=1}^d (T_jg_j)^{\theta_j q}=0\}} \prod_{j=1}^d (T_jf_j)^{\theta_j q}\dmu\\
   &=A+\epsilon^q \int_{\{\prod_{j=1}^d (T_jg_j)^{\theta_j q}=0\}} \prod_{j=1}^d (T_jf_j)^{\theta_j q}\dmu.
    \end{split}
\end{equation*}
Letting $\epsilon\to 0$, and crucially using the assumption that $q<1$, yields
\begin{equation}
    \label{eq:temp5}
 \int_{\{\prod_{j=1}^d (T_jg_j)^{\theta_j q}=0\}} \prod_{j=1}^d (T_jf_j)^{\theta_j q}\dmu=0.
\end{equation}

By assumption, each family $\{T_jf_j\}_{f_j\in (B_{X_j})_+}$ of functions saturates $\Omega$ and hence, by Corollary \ref{corollary:saturation}, so does the family $H:=\{\prod_{j=1}^d (T_jf_j)^{\theta_j q} \}_{f_1\in (B_{X_1})_+,\ldots,f_d\in (B_{X_d})_+}$ of functions. By \eqref{eq:temp5}, for every $h\in H$ we have $h=0$ $\mu$-almost everywhere on the set $\{\prod_{j=1}^d (T_jg_j)^{\theta_j q}=0\}$, which implies, by definition of saturation, that $\mu(\{\prod_{j=1}^d (T_jg_j)^{\theta_j q}=0\})=0$. Therefore, we have $T_jg_j>0$ $\mu$-almost everywhere for every $j=1,\ldots,d$.  The proof of the claim is completed.
\end{proof}

In the endpoint case $q=1$, the proof of Claim~\ref{claim_1} shows that there exists a maximiser $(T_jg_j)$; however, one can construct an example of operators $T_j$ for which each component $T_jg_j$ of every maximiser vanishes on a set of positive measure. 
\begin{example}Let $T: l^1(\{0,1\}) \to \mathcal{M}(\Omega)$ be given by 
$T(\alpha, \beta) = \alpha u + \beta v$ where $u$ and $v$ are nonnegative and disjointly supported, and $\int u > \int v >0$. Let $T_j =T$ 
and $\theta_j = 1/d$ for all $j$. Then extremals for the multilinear problem
$$ \int \prod_{j=1}^d \abs{T(\alpha_j,\beta_j)}^{q/d}\dmu\leq A_q(u,v) \prod_{j=1}^d (\abs{\alpha_j}+\abs{\beta_j})^{q/d}$$
consist precisely of $d$-tuples of extremals for the linear problem $\int |T(\alpha, \beta)|^q {\rm d}\mu \leq A_q(u,v)(|\alpha| + |\beta|)^q$. But, when $q=1$, the extremum for this linear problem satisfies $A_1(u,v)=\max \{\norm{u}_{L^1},\norm{v}_{L^1}\}=\norm{u}_{L^1}$  and hence its extremals $(\alpha, \beta)$ satisfy $\beta = 0$, and so $T(\alpha, \beta)$ vanishes outside ${\rm supp }\; u$. 
\end{example}
Therefore, $(\phi_i(g))$ cannot be defined by formula \eqref{eq:formula_phi} at the endpoint $q=1$, and hence one needs to work with the exponents $q\in(0,1)$  in order to avoid possibly dividing by zero. 
\begin{example_continued}The formula \eqref{eq:formula_phi} for $\phi(q):=\phi_{i}(q)$ does not make sense directly for $q=1$ because of division by zero. 
However, the limit $\phi:=\lim_{q \uparrow 1}\phi(q)$ does make sense everywhere and satisfies $\prod_i \phi^{1/d}\geq \frac{1}{A_1(u,v)}$ and $\int T(\alpha,\beta) \phi \dmu\leq (\abs{\alpha}+\abs{\beta})$ as desired. Indeed, the limit has the explicit formula
\begin{equation*}
    \begin{split}
    \phi:=\lim_{q \uparrow 1}\phi(q)&=\lim_{q\uparrow 1}(\norm{u}_{L^q}^{-q} u^{q-1}1_{\supp u}+\norm{v}_{L^q}^{-q} v^{q-1}1_{\supp v})\\
&=\frac{1}{\norm{u}_{L^1}}1_{\supp u}+\frac{1}{\norm{v}_{L^1}}1_{\supp v}\geq \frac{1}{A_1(u,v)}1_{\Omega}.
    \end{split}
\end{equation*}
This discussion serves to illustrate the passage from the case $0<q<1$ to the case $q=1$ which takes place in Section \ref{sec:infinite}.

\end{example_continued}

Next, we check that the inequality \eqref{eq:maurey_conclusion_a} is satisfied:

\begin{claim}[Conclusion \eqref{eq:maurey_conclusion_a} is satisfied]\label{claim_3}Assume that each $g_j\in (B_{X_j})_+$ is strictly positive $\mu$-almost everywhere. We define the functions $\phi_i(g)$ by formula \eqref{eq:formula_phi}, in which no division by zero occurs by the assumption. Then the functions $\phi_i(g)$ satisfy the inequality \eqref{eq:maurey_conclusion_a}, that is $$
    \int \left( \prod_{j=1}^d \phi_j^{\theta_j}\right)^{q'}\dmu\leq A.
$$
\end{claim}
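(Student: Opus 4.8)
The plan is purely computational: the claim reduces to an algebraic identity for the geometric mean $\prod_{j=1}^d \phi_j(g)^{\theta_j}$ followed by a single application of the hypothesis \eqref{eq:finite_a}, so I expect no genuine obstacle. First I would substitute the defining formula \eqref{eq:formula_phi}, pull the product over $i$ inside, and use the normalisation $\sum_{j=1}^d\theta_j=1$; here the standing hypothesis $T_jg_j>0$ $\mu$-a.e.\ is what makes all the powers, including the negative ones, well defined $\mu$-a.e.:
\begin{equation*}
\prod_{i=1}^d \phi_i(g)^{\theta_i}
= \prod_{i=1}^d (T_ig_i)^{-\theta_i}\cdot\Bigl(\prod_{j=1}^d (T_jg_j)^{q\theta_j}\Bigr)^{\sum_{i=1}^d\theta_i}
= \Bigl(\prod_{j=1}^d (T_jg_j)^{\theta_j}\Bigr)^{q-1} \quad\text{$\mu$-a.e.}
\end{equation*}
Then, since $q'=q/(q-1)$ gives $(q-1)q'=q$, raising to the power $q'$ yields
\begin{equation*}
\Bigl(\prod_{j=1}^d \phi_j(g)^{\theta_j}\Bigr)^{q'}=\prod_{j=1}^d (T_jg_j)^{q\theta_j}\quad\text{$\mu$-a.e.}
\end{equation*}

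Finally I would integrate this identity and apply \eqref{eq:finite_a} with $f_j:=g_j$: since $g_j\in(B_{X_j})_+$ we have $\norm{g_j}_{X_j}\leq 1$, hence
\begin{equation*}
\int \Bigl(\prod_{j=1}^d \phi_j(g)^{\theta_j}\Bigr)^{q'}\dmu
= \int \prod_{j=1}^d (T_jg_j)^{q\theta_j}\dmu \leq A,
\end{equation*}
which is exactly \eqref{eq:maurey_conclusion_a}. (When $(g_j)$ is moreover a maximiser, the last inequality is an equality by Claim \ref{claim_1}.) The one ingredient genuinely needed beyond \eqref{eq:finite_a} is the pointwise positivity $T_jg_j>0$ --- precisely what Claim \ref{claim_2} supplies and what is assumed here --- so the only ``work'' is the bookkeeping above; the real content of the claim is encoded in the choice of the defining formula \eqref{eq:formula_phi}, which is engineered exactly so that its $\theta_j$-weighted geometric mean collapses to a power of $\prod_j (T_jg_j)^{\theta_j}$.
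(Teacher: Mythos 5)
Your proof is correct and is essentially identical to the paper's own argument: both compute that $\prod_i\phi_i(g)^{\theta_i}=\bigl(\prod_j(T_jg_j)^{\theta_j}\bigr)^{q-1}$ using $\sum_j\theta_j=1$, raise to the power $q'$ via $(q-1)q'=q$, and then integrate and apply \eqref{eq:finite_a} with $f_j=g_j$ and $\norm{g_j}_{X_j}\leq 1$. No gaps.
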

\begin{proof}[Proof of Claim \ref{claim_3}] We have the identity
$$
\prod_{i=1}^d \phi_i^{\theta_i}=\prod_{i=1}^d \left(\prod_{j=1}^d (T_jg_j)^{\theta_jq} (T_ig_i)^{-1}\right)^{\theta_i}=\left(\prod_{j=1}^d (T_jg_j)^{\theta_j} \right)^{q-1}.
$$
This together with $q':=q/(q-1)$ further gives  the identity $\left(\prod_{i=1}^d \phi_i^{\theta_i}\right)^{q'}=\prod_{j=1}^d (T_jg_j)^{\theta_jq}$. Therefore, by assumption \eqref{eq:finite_a}, we have
$$
\int \left(\prod_{i=1}^d \phi_i^{\theta_i}\right)^{q'}\dmu=\int \prod_{j=1}^d (T_jg_j)^{\theta_jq} \dmu \leq A \prod_{j=1}^d\norm{g_j}_{X_j}^{\theta_jq} \leq A.
$$
\end{proof}

Finally,  we prove via a standard variational argument that inequality \eqref{eq:maurey_conclusion_b} is satisfied:

\begin{claim}[Conclusion \eqref{eq:maurey_conclusion_b} is satisfied]\label{claim_4}Assume that $(g_1,\ldots,g_d)\in (B_{X_1})_+\times\cdots \times (B_{X_d})_+$ is a maximiser.
We define the measurable functions $\phi_j:=\phi_j(g)$ by formula \eqref{eq:formula_phi}. 
Then the functions $\phi_j$ satisfy the inequality \eqref{eq:maurey_conclusion_b}, that is
$$
  \int \abs{T_jf_j} \phi_j \dmu \leq A \norm{f_j}_{X_j} \quad \text{ for all $f_j\in X_j$}.$$
\end{claim}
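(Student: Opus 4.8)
\textbf{Proof proposal for Claim \ref{claim_4}.} The plan is to perturb the maximiser $(g_1,\dots,g_d)$ in the $j$-th coordinate and extract \eqref{eq:maurey_conclusion_b} by differentiating the resulting scalar inequality at the base point. First I would reduce to non-negative test vectors: since $X_j$ is a normed lattice we have $\norm{\abs{f_j}}_{X_j}=\norm{f_j}_{X_j}$, and since $T_j$ is positive we have $\abs{T_jf_j}\leq T_j\abs{f_j}$; hence it suffices to prove $\int (T_jf_j)\,\phi_j\dmu\leq A\norm{f_j}_{X_j}$ for $f_j\geq 0$. Fix such an $f_j$. For $\epsilon>0$ the vector $g_j+\epsilon f_j$ is non-negative with $\norm{g_j+\epsilon f_j}_{X_j}\leq 1+\epsilon\norm{f_j}_{X_j}$, and by positivity $T_j(g_j+\epsilon f_j)=T_jg_j+\epsilon T_jf_j\geq T_jg_j>0$ $\mu$-a.e.\ by Claim \ref{claim_2}. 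Feeding $(g_1,\dots,g_{j-1},g_j+\epsilon f_j,g_{j+1},\dots,g_d)$ into hypothesis \eqref{eq:finite_a} and using $\norm{g_i}_{X_i}\leq 1$ for $i\neq j$ gives
\[
\int (T_jg_j+\epsilon T_jf_j)^{\theta_jq}\prod_{i\neq j}(T_ig_i)^{\theta_iq}\dmu\ \leq\ A\,(1+\epsilon\norm{f_j}_{X_j})^{\theta_jq}.
\]
The left-hand side at $\epsilon=0$ equals $A$ because $(g_j)$ is a maximiser (Claim \ref{claim_1}); subtracting $A$ from both sides and dividing by $\epsilon>0$ yields
\[
\int \frac{(T_jg_j+\epsilon T_jf_j)^{\theta_jq}-(T_jg_j)^{\theta_jq}}{\epsilon}\prod_{i\neq j}(T_ig_i)^{\theta_iq}\dmu\ \leq\ A\,\frac{(1+\epsilon\norm{f_j}_{X_j})^{\theta_jq}-1}{\epsilon}.
\]

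The main point is the passage $\epsilon\downarrow 0$, and this is where the auxiliary hypothesis $q<1$ enters. Since $\theta_jq<1$, the function $t\mapsto t^{\theta_jq}$ is concave, so for fixed $\omega$ the difference quotient $\epsilon\mapsto\bigl((T_jg_j+\epsilon T_jf_j)^{\theta_jq}-(T_jg_j)^{\theta_jq}\bigr)/\epsilon$ is non-negative and increases, as $\epsilon\downarrow 0$, to $\theta_jq\,(T_jg_j)^{\theta_jq-1}\,T_jf_j$ (a finite quantity since $T_jg_j>0$ $\mu$-a.e.). By the monotone convergence theorem the left-hand side above converges to $\theta_jq\int (T_jg_j)^{\theta_jq-1}T_jf_j\prod_{i\neq j}(T_ig_i)^{\theta_iq}\dmu$, while the right-hand side converges to $A\,\theta_jq\,\norm{f_j}_{X_j}$; cancelling $\theta_jq>0$ yields
\[
\int (T_jg_j)^{\theta_jq-1}\,T_jf_j\prod_{i\neq j}(T_ig_i)^{\theta_iq}\dmu\ \leq\ A\,\norm{f_j}_{X_j}.
\]
Finally I would observe that by the defining formula \eqref{eq:formula_phi} one has $\phi_j=(T_jg_j)^{\theta_jq-1}\prod_{i\neq j}(T_ig_i)^{\theta_iq}$, so the left-hand side is precisely $\int (T_jf_j)\,\phi_j\dmu$; combined with the lattice reduction of the first step this proves \eqref{eq:maurey_conclusion_b}.

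I expect the only genuine obstacle to be the justification of the limit under the integral sign --- exactly the place where the restriction $q<1$ (equivalently $\theta_jq<1$) is used, paralleling its use in Claim \ref{claim_2} --- and it is handled cleanly by concavity together with the monotone convergence theorem. The remaining ingredients (the lattice reduction, admissibility of the perturbed vector $g_j+\epsilon f_j$, and the value of the functional at the maximiser) are routine.
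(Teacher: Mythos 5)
Your proposal is correct and follows essentially the same route as the paper: perturb the maximiser in the $j$-th coordinate, subtract the extremal value $A$, divide by $\epsilon$, and pass to the limit $\epsilon\downarrow 0$ in the resulting difference-quotient inequality. The only immaterial differences are that you justify the limit by monotone convergence via concavity of $t\mapsto t^{\theta_j q}$ where the paper invokes Fatou's lemma (both work, since the difference quotients are non-negative), and that you carry $\norm{f_j}_{X_j}$ explicitly rather than normalising it to $1$.
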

\begin{proof}[Proof of Claim \ref{claim_4}]Fix a component index $i$. Fix a vector $f_i\in X_i$. By positivity and homogeneity, we may assume that $f_i\geq 0$ and that $\norm{f_j}_{X_i}\leq 1$. Let $\epsilon>0$ be a perturbation parameter. Applying assumption \eqref{eq:finite_a} to a perturbation $(\tilde{g}_j)$ in the $i$-th component around the maximiser, $$\tilde{g}_j:=\begin{cases} g_j & \text{ for } j\neq i\\ g_i+\epsilon f_i & \text{ for } j=i, \end{cases}$$
we obtain
\begin{equation}
    \label{eq:temp1}
\int \prod_{j:j\neq i} (T_j g_j)^{\theta_jq} (T_i(g_i+\epsilon f_i))^{\theta_i q}\dmu \leq A (1+\epsilon)^{\theta_i q}.
\end{equation}
By linearity,
\begin{equation}  \label{eq:temp2}
\begin{split}
    &\int \prod_{j:j\neq i} (T_j g_j)^{\theta_jq} (T_i(g_i+\epsilon f_i))^{\theta_i q}\dmu\\
    &= \int \left(\prod_{j=1}^d (T_jg_j)^{\theta_jq}\right) (\epsilon (T_ig_i)^{-1} T_if_i+1)^{\theta_iq}\dmu.  
    \end{split}
    \end{equation}
Since $(g_j)$ is a maximiser, we have
\begin{equation}
      \label{eq:temp3}
      \int \prod_{j=1}^d (T_jg_j)^{\theta_jq} \dmu=A.
\end{equation}
Combining estimates \eqref{eq:temp1} and \eqref{eq:temp2}, subtracting equality \eqref{eq:temp3} from both sides, and dividing both sides by $\epsilon$ yields
\begin{equation*}
    \begin{split}
&\int  \left(\prod_{j=1}^d (T_jg_j)^{\theta_jq}\right) \left( \frac{(1+ \epsilon (T_ig_i)^{-1} T_if_i)^{\theta_iq}-1}{\epsilon} \right)\dmu\leq A\left( \frac{(1+\epsilon)^{\theta_i q} -1}{\epsilon}\right).
\end{split}
\end{equation*}
 By Fatou's lemma, letting $\epsilon\to 0$ gives
 $$
\int \phi_i T_if_i \dmu:=  \int \left(\prod_{j=1}^d (T_jg_j)^{\theta_jq}\right) (T_ig_i)^{-1} T_if_i\dmu \leq A.
 $$
 The proof of the claim is completed.
\end{proof}
This completes also the proof of Proposition \ref{proposition:maurey}.
\end{proof}

\section{General case via finite intersection property and weak compactness}\label{sec:infinite}
In this section we establish the general case of Theorem~\ref{theorem:disentanglement}, building on the finitistic case given by Proposition \ref{prop:functional_finite}. The idea of the proof is to find an exact factorisation among approximate factorisations using the finite intersection property and weak compactness.

We recall that, by Lemma \ref{lemma:upgrading},
we may assume without loss of generality that the measure $\mu$ is a probability (in place of merely $\sigma$-finite) measure and that each family $U_j$ of functions is strongly saturating (in place of merely saturating). 

By homogeneity, we may assume without loss of generality that $A=1$. For $q\in(0,1]$ and $L_j\subseteq K_j$, we define the set $\Phi_{q,\{L_j\}}$ of approximate factorisations by
\begin{subequations}
\begin{align}
   \nonumber \Phi_{q,\{L_j\}}:=\{&(\phi_j)\in \cm(\Omega)_+^d : \eqref{eq:defa} \mbox{ and }\eqref{eq:defb} \mbox{ hold}\}
\end{align}
where
\begin{align}
\label{eq:defa}\int \left( \prod_{j=1}^d \phi_j^{\theta_j}\right)^{q'} \dmu \leq 1 \\
\label{eq:defb} \mbox{and } \max_{j=1,\ldots,d} \sup_{k_j \in L_j} \int u_{j,k_j}\phi_j \dmu \leq 1.
\end{align}
\end{subequations}    

In the case $q=1$ (or equivalently, $q'=-\infty$) inequality \eqref{eq:defa} is interpreted as $\essinf \prod_{j=1}^d \phi_j^{\theta_j} \geq 1$. 
When $q=1$ and $L_j=K_j$, the set $\Phi_{1,\{K_j\}}$ consists of exact factorisations because in this case the inequalities \eqref{eq:defa} and \eqref{eq:defb}  recover the inequalities \eqref{eq:dt_decomposition} and \eqref{eq:dt_componentwise_bound}.

By the strong saturation hypothesis, for each $j=1,\ldots,d$, there exists $u_{j,k_{j,0}}$ with $k_{j,0}\in K_j$ such that $u_{j,k_{j,0}}>0$ $\mu$-almost everywhere. We fix, once and for all, an arbitrary such weight $w_j:=u_{j,k_{j,0}}$ with $k_{j,0}\in K_j$   and define the collection
\begin{equation}
\label{eq:deftemp5}
\cl_j:=\{L_j\subseteq K_j: \text{$L_j$ is finite and $L_j\ni {k_{j,0}}$ }\}.
\end{equation}
of indexing sets.
Note that by definition  the collection $\{u_{j,k_j}\}_{k_j\in L_j}$ of functions is strongly saturating for every indexing set $L_j\in \cl_j$.

The sets of approximate factorisations have the following key properties:
\begin{claim}[Properties of the sets of approximate factorisations]\label{claim_5}
The sets $\Phi_{q,\{L_j\}}$ satisfy:
\begin{enumerate}

  \item \label{item:monotonicity}(Monotonicity) We have
    $\Phi_{q_1,\{L_{j,1}\}}\supseteq  \Phi_{q_2,\{L_{j,2}\}}$ whenever $q_1\leq q_2$ and $L_{j,1}\subseteq L_{j,2} \subseteq K_j$ for each $j$.
    
    \item (Set limit) For every set $Q\subseteq (0,1]$ of exponents and collection $\mathcal{K}_j\subseteq P(K_j)$ of indexing sets, we have
    $$
    \bigcap_{q\in Q, L_j\in \mathcal{K}_j} \Phi_{q,\{L_j\}}=\Phi_{\sup_{q\in Q} q,\{\bigcup_{L_j\in \mathcal{K}_j}L_j\}}.
    $$
    
  \item (Non-emptiness) For all $q\in(0,1)$ and $L_j\in \cl_j$, the set $\Phi_{q,\{L_{j}\}}$ is non-empty.
 \item (Finite intersection property) The collection $\{\Phi_{q,\{L_{j}\}}\}_{q\in(0,1),L_j\in \cl_j}$ has the finite intersection property.
 \end{enumerate}
 \end{claim}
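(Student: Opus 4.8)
The plan is to make the \emph{monotonicity} assertion the hinge of the argument: both the set-limit property and the finite intersection property follow from it (the latter together with non-emptiness), while non-emptiness is reduced to the finitistic Proposition~\ref{prop:functional_finite}. For monotonicity, suppose $(\phi_j)\in\Phi_{q_2,\{L_{j,2}\}}$ with $q_1\le q_2$ and $L_{j,1}\subseteq L_{j,2}\subseteq K_j$. Inequality~\eqref{eq:defb} for $\{L_{j,1}\}$ is immediate since its supremum runs over smaller index sets. For~\eqref{eq:defa}, write $h:=\prod_{j=1}^d\phi_j^{\theta_j}$ and observe that $|q'|:=\frac{q}{1-q}$ is increasing in $q\in(0,1)$, with $|q'|=+\infty$ at $q=1$. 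If $q_2<1$ (hence $q_1<1$ too), then $\int h^{q_2'}\dmu\le1$ together with $q_2'<0$ forces $h>0$ $\mu$-a.e.; since $\mu$ is a probability measure, monotonicity of $L^p$-norms gives $\norm{h^{-1}}_{L^{|q_1'|}(\mu)}\le\norm{h^{-1}}_{L^{|q_2'|}(\mu)}\le1$, which is~\eqref{eq:defa} for $q_1$. If $q_2=1$, then $h\ge1$ $\mu$-a.e., so $\int h^{q_1'}\dmu\le\mu(\Omega)=1$ when $q_1<1$, while $q_1=1$ is trivial. Hence $(\phi_j)\in\Phi_{q_1,\{L_{j,1}\}}$.

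Granting monotonicity and non-emptiness, the \emph{finite intersection property} is formal: given $q_1,\dots,q_n\in(0,1)$ and $L_{j,i}\in\cl_j$, the exponent $\bar q:=\max_i q_i$ lies in $(0,1)$ and $\bar L_j:=\bigcup_{i=1}^nL_{j,i}$ is a finite subset of $K_j$ containing $k_{j,0}$, so $\bar L_j\in\cl_j$; by monotonicity $\Phi_{\bar q,\{\bar L_j\}}\subseteq\bigcap_{i=1}^n\Phi_{q_i,\{L_{j,i}\}}$, and the left-hand set is non-empty. For the \emph{set limit}, put $\bar q:=\sup_{q\in Q}q$ and $\bar L_j:=\bigcup_{L_j\in\mathcal{K}_j}L_j$; the inclusion $\Phi_{\bar q,\{\bar L_j\}}\subseteq\bigcap_{q\in Q,\,L_j\in\mathcal{K}_j}\Phi_{q,\{L_j\}}$ is immediate from monotonicity. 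For the reverse inclusion, take $(\phi_j)$ in every $\Phi_{q,\{L_j\}}$: then~\eqref{eq:defb} for $\{\bar L_j\}$ holds because every $k_j\in\bar L_j$ lies in some $L_j\in\mathcal{K}_j$, and~\eqref{eq:defa} with exponent $\bar q$ is trivial if this supremum is attained, and otherwise follows by choosing $q_m\in Q$ with $q_m\uparrow\bar q$ (so that $|q_m'|\uparrow|\bar q'|$) and passing to the limit in $\int(h^{-1})^{|q_m'|}\dmu\le1$, either by Fatou's lemma when $\bar q<1$ or by $\norm{\cdot}_{L^p(\mu)}\to\norm{\cdot}_{L^\infty(\mu)}$ when $\bar q=1$.

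The substantive step is \emph{non-emptiness}. Fix $q\in(0,1)$ and $L_j\in\cl_j$; since each $L_j$ is finite and contains $k_{j,0}$, the finite families $\{u_{j,k_j}\}_{k_j\in L_j}$ are saturating, so Proposition~\ref{prop:functional_finite} applies once its hypothesis~\eqref{eq:intermediate_maurey} is verified with $A=1$. To verify it, restrict the assumption~\eqref{eq:dt_assumption} (with $A=1$, after the normalisation) to families supported in $L_j$, and apply Jensen's inequality $\int\big(\prod_jF_j^{\theta_j}\big)^q\dmu\le\big(\int\prod_jF_j^{\theta_j}\dmu\big)^q$ — valid because $0<q<1$ and $\mu$ is a probability measure — to $F_j:=\bigabs{\sum_{k_j\in L_j}\alpha_{j,k_j}u_{j,k_j}}$; this yields exactly~\eqref{eq:intermediate_maurey}. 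Proposition~\ref{prop:functional_finite} then produces $(\phi_j)$ satisfying~\eqref{eq:maurey_functions_conclusion_a} and~\eqref{eq:componentwise_bound}, which are precisely~\eqref{eq:defa} and~\eqref{eq:defb}, so $\Phi_{q,\{L_j\}}\neq\emptyset$.

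I expect the main obstacle to be conceptual rather than computational: noticing that, once Lemma~\ref{lemma:upgrading} lets us assume $\mu$ is a probability measure, the $q=1$ hypothesis~\eqref{eq:dt_assumption} self-improves by Jensen to the $q<1$ hypothesis~\eqref{eq:intermediate_maurey} required by the finitistic Proposition~\ref{prop:functional_finite}. The rest is bookkeeping, the only delicate point being the endpoint $q=1$, where $q'=-\infty$ and~\eqref{eq:defa} is to be read as $\essinf\prod_{j=1}^d\phi_j^{\theta_j}\ge1$, which must be tracked carefully through the monotonicity and set-limit arguments.
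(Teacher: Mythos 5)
Your proposal is correct and follows essentially the same route as the paper: monotonicity via Jensen (your $L^p$-norm monotonicity of $h^{-1}$ on a probability space is the same observation), the set limit via monotonicity plus Fatou/the $L^p\to L^\infty$ limit at the endpoint, non-emptiness by self-improving \eqref{eq:dt_assumption} to \eqref{eq:intermediate_maurey} with Jensen and invoking Proposition~\ref{prop:functional_finite}, and the finite intersection property as a formal consequence of monotonicity, non-emptiness, and closure of $\cl_j$ under finite unions. No gaps.
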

 \begin{proof}[Proof of Claim \ref{claim_5}] \; \; 
 
\begin{itemize}[leftmargin=*]
\item
{\it Monotonicity.} Monotonicity with respect to exponents $q$ is immediate from the defining inequality \eqref{eq:defa} together with Jensen's inequality; monotonicity with respect to indexing sets $L_j$ is immediate from the defining inequality \eqref{eq:defb}.
 
 \item
 {\it Set limit.} We can write $
 \bigcap_{q\in Q, L_j\in \mathcal{K}_j} \Phi_{q,\{L_j\}}=\bigcap_{q\in Q} \bigcap_{L_j\in \mathcal{K}_j} \Phi_{q,\{L_j\}}.$
 The set limit  with respect to indexing sets $L_j$ is immediate from the defining inequality \eqref{eq:defb}. 
    The set limit with respect to exponents $q$ follows from the defining inequality \eqref{eq:defa} as follows: we pick a sequence $q_n\in Q$ with $q_n\uparrow \sup_{q\in Q} q$ and then, in the case $\sup_{q\in Q} q\in(0,1)$, use Fatou's lemma or, in the case $\sup_{q\in Q} q=1$, use the fact that
    $$
    \int \left( \prod_{j=1}^d \phi_j^{\theta_j}\right)^{q_n'} \dmu \leq 1 \text{ for all $q_n\in Q$ with $q_n\uparrow 1$} \iff \essinf \prod_{j=1}^d \phi_j^{\theta_j} \geq 1.
    $$
    
 \item
 {\it Non-emptiness.} By Jensen's inequality together with the assumption that $\mu$ is a probability measure, and by assumption \eqref{eq:dt_assumption}, we have 
    $$
\int_\Omega \prod_{j=1}^d \abs{\sum_{k_j\in L_j} \alpha_{j,k_j} u_{j,k_j}}^{\theta_jq} \dmu \leq \left( \int_\Omega \prod_{j=1}^d \abs{\sum_{k_j\in L_j} \alpha_{j,k_j} u_{j,k_j}}^{\theta_j} \dmu \right)^q \leq \prod_{j=1}^d \left( \sum_{k_j\in L_j} \abs{\alpha_{j,k_j}}\right)^{\theta_jq}
$$
   for all families $\{\alpha_{j,k_j}\}_{k_j\in L_j}$ of reals. Furthermore, by assumption, each indexing set $L_j$ is finite and each family $\{u_{j,k_j}\}_{k_j\in L_j}$ of functions is saturating. Therefore, by Proposition \ref{prop:functional_finite}, there exists $(\phi_j)$ satisfying the inequalities \eqref{eq:defa} and \eqref{eq:defb}.
   
   \item
   {\it Finite intersection property.} 
This follows from monotonicity and non-emptiness, together with preservation of finiteness and saturation under finite unions,
 $$
    \bigcap_{n=1}^N \Phi_{q_n,\{L_{j,n}\}}\supseteq \Phi_{\max\{q_1,\ldots,q_N\},\{\bigcup_{n=1}^N L_{j,n} \}}\neq \emptyset.
    $$

\end{itemize}
    The proof of the claim is completed.
  \end{proof} 
 We summarise what we have already established and what needs to be done. We need to prove that the set $\Phi_{1,\{K_j\}}$ is non-empty. By the set limit property, we know that
$$
\Phi_{1,\{K_j\}}=\bigcap_{q\in(0,1), L_j\in \cl_j} \Phi_{q,\{L_j\}}.
$$

Since the collection $\{\Phi_{q,\{L_j\}}\}_{q\in(0,1), L_j\in \cl_j}$ has the finite intersection property, we may conclude that its intersection $\Phi_{1,\{K_j\}}$ is non-empty if we can find some compact topological space in which all of the sets $\Phi_{q,\{L_j\}}$ are closed.

Now we use the raising-to-a-power approach described in the introduction: 
we will work with the powers $(\psi_j):=(\phi^{1/p}_j)$ (for any $1< p < \infty $) instead of functions $(\phi_j)$ themselves. Indeed, by the defining inequality \eqref{eq:defb} together with the facts that $L_j\ni k_{j,0}$ and $w_j:=u_{j,k_{j,0}}$, we have
$$
\norm{(\psi_j)}_{L^p(w_1)\times \cdots \times L^p(w_d)}:=\max_{j=1,\ldots,d} \int \abs{\psi_j}^p w_j\dmu \leq 1,$$
whereas
$$
\norm{(\phi_j)}_{L^1(w_1)\times \cdots \times L^1(w_d)}:=\max_{j=1,\ldots,d} \int \abs{\phi_j} w_j \dmu\leq 1.
$$
In other words, the $d$-tuple $(\psi_j)$ of powers of functions is contained in the unit ball of $L^p(w_1)\times\cdots\times L^p(w_d)$, which, by the Banach--Alaoglu theorem, is weakly compact since we are in the reflexive range $1 < p < \infty$; whereas the $d$-tuple $(\phi_j)$ of functions themselves is in the (non-compact) unit ball of $L^1(w_1)\times\cdots\times L^1(w_d)$. 

Therefore, we define the set $\Psi_{q,\{L_j\}}^p$ to be the component-wise $1/p$-th powers of $\Phi_{q,\{L_j\}}$, that is
$$ \Psi_{q,\{L_j\}}^p := \{(\phi_1^{1/p}, \dots , \phi_d^{1/p}) \, : \, (\phi_1, \dots , \phi_d) \in \Phi_{q,\{L_j\}}\},$$ 
and it suffices to show that each of these sets is weakly closed in the unit ball $B_{L^p(w_1)\times\cdots\times L^p(w_d)}$ of the normed product space $L^p(w_1)\times\cdots\times L^p(w_d)$. We do this by showing that 
$ \Psi_{q,\{L_j\}}^p$ is convex and norm-closed, and then appealing to the Hahn--Banach theorem.

We first note that from the defining inequalities \eqref{eq:defa} and \eqref{eq:defb}, together with the fact that the functions $\br^d_+\ni (a_j) \mapsto a_i^p \in \br_+$ and $\br^d_+\ni (a_j) \mapsto \left(\prod_{j=1}^d a_j^{\theta_j}\right)^{pq'}$ are convex, it follows that the sets $\Psi_{q,\{L_j\}}^{p}$ are convex.

Secondly, we observe that the sets $\Psi_{q,\{L_j\}}^{p}$ are norm-closed.
Indeed, assume that $\psi_n:=(\psi_{j,n})$ converges to $\psi:=(\psi_j)$ in the $L^p(w_1)\times\cdots\times L^p(w_d)$ norm. Thus, each component $\psi_{j,n}$ converges to $\psi_j$ in the $L^p(w_j)$ norm. By choosing a subsequence $\psi_{n_1(n)}$ of $\psi_n$ such that the first component $\psi_{n_1(n),1}$ converges to $\psi_{1}$ $\mu$-almost everywhere, a further subsequence $\psi_{n_2(n)}$ of $\psi_{n_1(n)}$  such that the second component $\psi_{n_2(n),2}$ converges to $\psi_{2}$ $\mu$-almost everywhere and so on, we can find a subsequence $\psi_{n_d(n)}$ of $\psi_n$ such that every component  $\psi_{n_d(n),j}$ converges to $\psi_{j}$ $\mu$-almost everywhere. Thus, by Fatou's lemma together with the pointwise (almost everywhere) convergence, the limit $\psi$ satisfies the inequalities defining the set $\Psi_{q,\{L_j\}}^{p}$.

Thirdly, it is a simple consequence of the Hahn--Banach theorem, often referred to as Mazur's theorem, that in any normed space, a convex norm-closed set is also weakly closed, see for example \cite[Theorem~3.12]{rudin}. Therefore, by the previous two observations,
$\Psi_{q,\{L_j\}}^{p}$ is weakly closed and we are done.

The proof of Theorem 2.1 is thus completed.

\medskip
{\em Final Remark.} Even when the $X_j$ are finite-dimensional normed lattices, we still need to run the arguments of this last section in order to obtain the case corresponding to $q=1$ of Proposition~\ref{proposition:maurey}, unless we happen to have extremisers $f_j$ such that $|T_jf_j| >0$ a.e. for each $j$.

\bibliographystyle{plain}

\bibliography{geometricmean}
\end{document}